\documentclass[11pt]{article}
\usepackage{amsmath, amssymb, a4wide, graphicx, color, enumerate}
\newtheorem{theorem}{Theorem}[section]

\newtheorem{rem}[theorem]{Remark}
\newtheorem{lemma}[theorem]{Lemma}
\newtheorem{definition}[theorem]{Definition}
\newtheorem{proposition}[theorem]{Proposition}

\newtheorem{ex}[theorem]{Example}
\newenvironment{proof}{\noindent {\bf Proof.  $ $}}{\hfill $\Box$\medskip\noindent}

\newenvironment{remark}{\begin{rem} \em }{\em \end{rem}}

\newcommand{\cb}    [1]{\ensuremath{\left  \{      #1  \right \}       }}

\newcommand{\of}    [1]{\ensuremath{\left (        #1  \right )        }}
\newcommand{\ofg}   [1]{\ensuremath{\bigl (        #1  \bigr  )        }}
\newcommand{\ofgg}  [1]{\ensuremath{\biggl(        #1  \biggr )        }}
\newcommand{\scp}   [1]{\ensuremath{\left \langle  #1  \right \rangle  }}

\newcommand{\ra} {\ensuremath{\rightarrow}}
\newcommand{\rra}{\ensuremath{\rightrightarrows}}
\newcommand{\st} {\ensuremath{|\;}}

\newcommand{\cl}  {{\rm cl  \,}}
\newcommand{\Cl}  {{\rm Cl  \,}}

\newcommand{\Int} {{\rm int  \,}}

\DeclareMathOperator*{\Sup}{ Sup}
\DeclareMathOperator*{\Inf}{ Inf}

\DeclareMathOperator*{\wMax}{wMax}
\DeclareMathOperator*{\wMin}{wMin}

\newcommand{\dom}{{\rm dom\,}}

\newcommand{\I}{\mathcal{I}}

\newcommand{\B}{\mathcal{B}}
\newcommand{\A}{\mathcal{A}}

\newcommand{\D}{\mathcal{D}}

\newcommand{\LL}{\mathcal{L}}

\newcommand{\R}{\mathbb{R}}

\author{Elvira Hern\'andez \and Andreas L\"ohne \and Luis Rodr\'iguez-Mar\'in \and Christiane Tammer}
\title{Lagrange duality, stability and subdifferentials in vector optimization}
\date{December 18, 2012}

\begin{document}
\maketitle

\begin{abstract} \noindent
Lagrange duality theorems for vector and set optimization problems which are based on a consequent usage of infimum and supremum (in the sense of greatest lower and least upper bounds with respect to a partial ordering) have been recently proven. In this note, we provide an alternative proof of strong duality for such problems via suitable stability and subdifferential notions. In contrast to most of the related results in the literature, the space of dual variables is the same as in the scalar case, i.e., a dual variable is a vector rather than an operator. We point out that duality with operators is an easy consequence of duality with vectors as dual variables.
\end{abstract}

\section{Introduction} \label{sec_in}

In order to derive duality assertions in set-valued optimization one has the possibilities to use an approach via conjugates, via Lagrangian technique or an axiomatic approach. Conjugate duality statements, based on different types of perturbation of the original problem, have been derived by Tanino and Sawaragi \cite{TanSaw80}, Sawaragi, Nakayama and Tanino \cite{SawNakTan85}, L\"ohne \cite{Loe05,Loe2012-book}, Bo\c{t}, Grad and Wanka \cite{BotGraWan09}, Hamel \cite{Hamel09} and others. Lagrange duality for set-valued problems has been studied, for instance, by Luc \cite{Luc88}, Ha \cite{Troung05}, Hern{\'a}ndez and Rodr{\'{\i}}guez-Mar{\'{\i}}n \cite{HerRod07,HerRod07-1}, Li, Chen and Wu \cite{LiCheWu09}, L\"ohne \cite{Loe2012-book} and Hamel and L\"ohne \cite{HamLoe12}. An axiomatic approach was given by Luc \cite{Luc88}. Furthermore, duality assertions can be developed for different solution concepts, this means for the vector approach (cf. Luc \cite{Luc88}, Bo\c{t}, Grad and Wanka \cite{BotGraWan09}, Li, Chen and Wu \cite{LiCheWu09}), for the set-approach (cf. Kuroiwa \cite{Kur99}, Hern{\'a}ndez and Rodr{\'{\i}}guez-Mar{\'{\i}}n \cite{HerRod07}) and by using supremal and infimal sets (see Nieuwenhuis \cite{Nieuwenhuis80}, Tanino \cite{Tanino88,Tanino92}) and/or infimum and supremum in a complete lattice for the lattice approach (cf. Tanino \cite{Tanino92}, Song \cite{Song97}, \cite{Song98}, L\"ohne \cite{Loe05,Loe2012-book}, Lalitha and Arora \cite{LalAro07}, Hamel \cite{Hamel09}). 

Another important difference in constructing dual problems concerns the type of dual variables, which can be vectors like in the scalar case (cf. L\"ohne \cite{Loe05,Loe2012-book}, Bo\c{t}, Grad and Wanka \cite{BotGraWan09}, Section 7.1.3), extended vectors (cf. Hamel \cite{Hamel09}, Hamel and L\"ohne \cite{HamLoe12}) or operators (cf. Corley \cite{Corley87}, Luc \cite{Luc88}, Tanino \cite{Tanino92}, Bo\c{t}, Grad and Wanka \cite{BotGraWan09}, Section 7.1.2, Hern{\'a}ndez and Rodr{\'{\i}}guez-Mar{\'{\i}}n \cite{HerRod07}, Lalitha and Arora \cite{LalAro07} , Li, Chen and Wu \cite{LiCheWu09}). 

In the mentioned papers one can observe that it is very easy to derive weak duality statements without additional assumptions. In order to get strong duality one needs additionally convexity and certain regularity assumptions. An important approach for the formulation of regularity assumptions is the stability of the primal set-valued problem. In the literature, stability of the primal set-valued problem is formulated using the subdifferential of the minimal value map (cf. Tanino and Sawaragi \cite{TanSaw80}, Bo\c{t}, Grad and Wanka \cite{BotGraWan09}). In this paper we will introduce a subdifferential notion based on infimal sets where subgradients are vectors. A corresponding stability notion is used to prove strong duality statements. Furthermore, we will discuss the space of dual variables and explain the relations between some other approaches from the literature. Subdifferential notions for vector and set-valued problems have been investigated by many authors, see e.g., Tanino \cite{Tanino92}, Jahn \cite{Jahn04}, Bo\c{t}, Grad and Wanka \cite{BotGraWan09}, Schrage \cite{Schrage09diss}, Hamel and Schrage \cite{HamSch12}. 

Lagrange duality theory is an important tool in optimization and
there are many approaches to a corresponding theory for vector
optimization problems, see e.g. Corley \cite{Corley81,Corley87}, \cite{RubGas04}, Bo\c{t} and Wanka \cite{BotWan04}, Li and Chen \cite{LiChe97}, Jahn \cite{Jahn04}, G\"{o}pfert, Tammer, Riahi and Z\u{a}linescu \cite{GoeTamRiaZal03}, Bo\c{t}, Grad and Wanka \cite{BotGraWan09} and the references therein.

In the scalar case, the basic idea is to assign to a given constrained optimization problem (called the primal problem)
$$\mbox{(p)} \hspace{6cm} p:=\inf_{x \in S} f(x), \hspace{12cm} $$
where $f\colon X \to \overline{\R}$, a Lagrange function $L:X \times
\Lambda \to \overline \R$, where $\Lambda$ is the set of dual variables, via
$$\displaystyle{\sup_{ u^*}}\, L(x, u^*) = f(x)\quad \mbox{ for  }\quad x \in S
\subseteq X.$$
One considers the closely related pair of mutually
dual unconstrained problems
$$ \inf_{x}\sup_{ u^*} L(x, u^*) \qquad \text{ and } \qquad \sup_{ u^*} \inf_{x} L(x, u^*).$$
The dual problem is usually written as
$$\mbox{(d)} \hspace{6cm} d:=\sup_{ u^*} \phi( u^*) \hspace{12cm}  $$
where $ u^*$ is called the dual variable and $\phi:\Lambda \to
\overline
 \R,\;\;\phi( u^*) := \displaystyle{\inf_{x} }\,L(x, u^*)$ is called
the dual objective function.  If $p=d$  we say that (d) is an exact dual problem of (p). One goal in duality theory is to find a subset of $\Lambda$ as small as possible so that (d) is still an exact
dual problem of (p). The concepts of infimum and supremum play an important role in Lagrange duality. In all the vectorial approaches the problem was to find an appropriate replacement for these concepts because of non-completeness of preference orders. One possibility to reobtain the lattice structure in vector optimization is to scalarize the Lagrange function and to use the usual infimum/supremum in the complete lattice $\overline \R:= \R \cup \cb{+\infty} \cup \cb{-\infty}$ see, for instance, \cite{Jahn86, Jahn04}. Another idea was to replace the infimum/supremum by minimality/maximality notions, see e.g. Bo\c{t}, Grad and Wanka \cite{BotGraWan09}. In \cite{LoeTam07}, a vector optimization problem has been extended to a set-valued problem in order to obtain a complete lattice. Conjugate duality statements have been proven in order to demonstrate that the ideas from scalar optimization can be analogously formulated in the vectorial framework. In \cite{Loe2012-book} corresponding Lagrange duality results have been established. 

In this paper, we provide an alternative proof of the duality results given in \cite{Loe2012-book}. Our approach is motivated mainly by the paper \cite{Tanino92} by Tanino. Even though Tanino \cite{Tanino92} and related papers \cite{Song97,LiCheWu09,CheLi09} did not mention the complete lattice structure, his results are closely related to the lattice approach. In this paper we use a classical notation which is similar to the one in \cite{Tanino92} in order to emphasize the relationships. The results of this paper can be easily restated by using the infimum and supremum in a complete lattice, see e.g. \cite{Loe2012-book} for more details. Another purpose of this paper is to simplify the set of dual variables. We will use vectors rather than operators and point out that this leads to stronger duality statements. An operator variant of our duality result will be obtained as a corollary. 

This paper is organized as follows. In Section 2 we recall the concepts and results that we use later, in particular,  Lagrange duality. Section 3 is devoted to a new proof of the strong duality theorem, which is based on new stability and subgradient notions that we introduce before. In Section 4 we formulate duality assertions with operators as dual variables and show that they easily follow from the results with vectors as dual variables.
 

\section{Preliminaries}

\subsection{Infimal sets and the complete lattice of self-infimal sets}

We recall in this section the concept of an {\em infimal set} (resp. {\em supremal set}), which is due to Nieuwenhuis \cite{Nieuwenhuis80}, was extended by Tanino \cite{Tanino88}, and slightly modified with respect to the elements $\pm\infty$ in \cite{LoeTam07}. We will shortly discuss the role of the space of self-infimal sets, which was shown in \cite{LoeTam07} to be a complete lattice. 

Let $(Y,\leq)$ be a partially ordered linear topological space, where the order is induced by a convex cone $C$ satisfying
$\emptyset \neq \Int C \neq Y$. We write  $y\leq y'$ if  $y'-y\in C$ and  $y<y'$ if  $y'-y\in \Int C$. 
We denote by $\overline{Y}:=  Y \cup \cb{-\infty}\cup\cb{+\infty}$ the extended space, where the ordering is extended by the convention
\[ \forall y \in Y:\; -\infty \leq y \leq +\infty.\]
The {\em upper closure} (with respect to $C$) of $A \subseteq  \overline{Y}$ is defined \cite{LoeTam07,Loe2012-book} to be the set
$$ \Cl_+ A := \left\{ \begin{array}{lll}
               Y         & \mbox{ if } & -\infty \in A \\
               \emptyset & \mbox{ if } & A=\cb{+\infty} \\
               \cb{y \in  Y\st  \cb{y} + \Int C \subseteq A\setminus\cb{+\infty} + \Int C}& \mbox{ otherwise. } &
                      \end{array} \right.
$$
We have \cite[Proposition 1.40]{Loe2012-book} 
\begin{equation}\label{eq_clplus}
     \Cl_+ A := \left\{ \begin{array}{lll}
               Y         & \mbox{ if } & -\infty \in A \\
               \emptyset & \mbox{ if } & A=\cb{+\infty} \\
               \cl(A\setminus\cb{+\infty}+C)& \mbox{ otherwise. } &
                      \end{array} \right.
\end{equation}
The set of {\em weakly
minimal elements} of a subset $A \subseteq Y$ (with respect to
$C$) is defined by
$$ \wMin A := \cb{y \in A \st (\cb{y} -\Int C) \cap A = \emptyset},$$
We have \cite[Corollary 1.44]{Loe2012-book}
$$ \wMin \Cl_+ A \neq \emptyset \iff \emptyset \neq \Cl_+A \neq Y.$$
The {\em infimal set} of $A \subseteq \overline{Y}$ (with respect to C) is defined by
$$ \Inf A := \left\{ \begin{array}{lll}
                      \wMin \Cl_+ A  & \mbox{ if } & \emptyset \neq \Cl_+ A \neq  Y \\
		      \cb{-\infty}  & \mbox{ if } & \Cl_+ A =  Y                   \\
		      \cb{+\infty}  & \mbox{ if } & \Cl_+ A = \emptyset.
		      \end{array} \right.
$$
We see that the infimal set of $A$ with respect to $C$ coincides essentially with the set of weakly minimal elements of the set $\cl(A+C)$. Note that  if $A\subseteq Y$ then  $\wMin A=A\cap \Inf A$.

By our conventions, $\Inf A$ is always a nonempty set. If $-\infty$ belongs to $A$, we have $\Inf A = \cb{-\infty}$, in particular, $\Inf \cb{-\infty} =\cb{-\infty}$. Furthermore, one has
$\Inf \emptyset = \Inf \cb{+\infty} =  \cb{+\infty}$ and $\Cl_+ A = \Cl_+(A \cup \cb{+\infty})$. Hence $\Inf A = \Inf (A \cup \cb{+\infty})$ for all $A \subseteq \overline{Y}$.

The following properties of infimal sets seem to be essentially due to Nieuwenhuis \cite{Nieuwenhuis80}. Extended variants (but slightly different to the following ones) have been established by \cite{Tanino88}.  

\begin{proposition}[\cite{Loe2012-book}, Corollary 1.48] \label{pr}
Let $A,B \subseteq  Y$ with $\emptyset \neq \Cl_+A \neq  Y$ and $\emptyset \neq \Cl_+B \neq  Y$, then
\begin{enumerate}[(i)]
\item \label{pr_0_1} $ \Inf A =
              \cb{y \in  Y \st  y \not\in A + \Int C,\;
              \cb{y} + \Int C \subseteq A + \Int C}$,
\item \label{pr_0_2} $ A + \Int C = B + \Int C \iff \Inf A = \Inf B$,
\item \label{pr_0_3} $A + \Int C = \Inf A + \Int C$,
\item \label{pr_0_4} $\Cl_+ A = \Inf A \cup (\Inf A + \Int C)$,
\item \label{pr_0_5} $\Inf A$, $(\Inf A- \Int C)$ and $(\Inf A + \Int C)$ are disjoint,
\item \label{pr_0_6} $\Inf A \cup (\Inf A- \Int C) \cup (\Inf A + \Int C) =  Y$.
\item \label{pr_0_7} $\Inf(\Inf A + \Inf B) = \Inf(A+B)$,
\item \label{pr_0_8} $\alpha \Inf A = \Inf (\alpha A)$ for $\alpha >0$.
\end{enumerate}
For $A \subseteq \overline{Y}$ one has
\begin{enumerate}[(i)]
\item[(ix)] $\Inf \Inf A = \Inf A$, $\Cl_+ \Cl_+ A = \Cl_+ A$, $\Inf \Cl_+ A = \Inf A$, $\Cl_+ \Inf A = \Cl_+ A$,
\end{enumerate}
\end{proposition}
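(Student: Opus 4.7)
The plan is to regard (i) as the master identity and derive everything else from it together with the one nontrivial ingredient, namely that elements of $A+\Int C$ can be dominated from below by elements of $\Inf A$.

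First I would establish (i). By (\ref{eq_clplus}) and our assumptions, $\Cl_+ A=\cl(A+C)$, and a direct topological argument using $\Int C + C \subseteq \Int C$ gives the alternative description $\Cl_+ A=\cb{y\in Y\st \cb{y}+\Int C\subseteq A+\Int C}$. Thus $y\in\Inf A=\wMin\Cl_+ A$ amounts to $\cb{y}+\Int C\subseteq A+\Int C$ together with $(\cb{y}-\Int C)\cap\Cl_+ A=\emptyset$. The second condition is equivalent to $y\notin A+\Int C$: if $y=a+c$ with $c\in\Int C$, then $y-c/2=a+c/2\in\Cl_+ A$, contradicting weak minimality; conversely, any $y-c\in\Cl_+ A$ with $c\in\Int C$ would force $y=(y-c)+c\in A+\Int C$. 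This proves (i) and simultaneously gives the disjoint decomposition $\Cl_+ A=\Inf A\,\dot\cup\,(A+\Int C)$.

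Next I would attack (iii), which I expect to be the main obstacle. The inclusion $\Inf A+\Int C\subseteq A+\Int C$ is immediate from (i). The opposite inclusion $A+\Int C\subseteq\Inf A+\Int C$ is the heart of the matter: given $y\in A+\Int C$, one must produce $z\in\Inf A$ with $y-z\in\Int C$. This is a domination/minimality statement and should be proved by a Zorn-type argument applied to the set $\Cl_+ A\cap(y-\Int C)$ ordered by $\leq$, using the nonemptyness result $\wMin\Cl_+ A\neq\emptyset$ from Corollary 1.44 of \cite{Loe2012-book} (applied after translating) together with the closedness of $\Cl_+ A$ to verify that chains have lower bounds in the relevant direction. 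Once (iii) is in place, (iv) follows by replacing $A+\Int C$ with $\Inf A+\Int C$ in the decomposition $\Cl_+ A=\Inf A\cup(A+\Int C)$ established above.

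From here the remaining items are bookkeeping. For (ii), the forward direction follows directly from (i) (only $A+\Int C$ enters the characterization) and the backward direction from (iii): $A+\Int C=\Inf A+\Int C=\Inf B+\Int C=B+\Int C$. For the disjointness (v), if $z+c=z'-c'$ with $z,z'\in\Inf A$ and $c,c'\in\Int C$, then $z'=z+(c+c')\in\Inf A+\Int C$, which contradicts the partition in (iv); the other two disjointness assertions are analogous. The trichotomy (vi) is obtained by combining (iv) with a second Zorn-type domination argument: for $y\notin\Cl_+ A$ one produces $z\in\Inf A$ with $z-y\in\Int C$, i.e.\ $y\in\Inf A-\Int C$.

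Finally, for (vii) I would apply (ii): by (iii) one has $\Inf A+\Inf B+\Int C=\Inf A+(B+\Int C)=(A+B)+\Int C$, so $\Inf(\Inf A+\Inf B)=\Inf(A+B)$. Property (viii) follows similarly, since $\alpha\Inf A+\Int C=\alpha(\Inf A+\Int C)=\alpha(A+\Int C)=\alpha A+\Int C$ for $\alpha>0$. For (ix), $\Inf\Inf A=\Inf A$ is immediate from (iii) via (ii), while $\Cl_+\Cl_+ A=\Cl_+ A$ and the mixed identities follow from the fact that $\Cl_+$ depends only on $A+\Int C$ (equivalently on $\Inf A$ by (ii)) and that $\Inf A\subseteq\Cl_+ A$, together with $\Cl_+ A\setminus\Inf A\subseteq A+\Int C\subseteq\Cl_+ A$. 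The only genuine work is the minimality/domination argument in (iii) and (vi); the rest is algebraic and topological rearrangement.
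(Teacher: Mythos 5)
Your overall architecture --- take (i) as the master identity, isolate the domination property (every point of $A+\Int C$, resp.\ every point outside $\Cl_+ A$, sits above, resp.\ below, some point of $\Inf A$) as the only substantive step, and reduce the remaining items to algebra --- is sound and is essentially the route of the cited source \cite{Loe2012-book}, Corollary 1.48 (the present paper only quotes the result, so there is no in-text proof to compare against). Your proof of (i) is correct: both the identification $\Cl_+ A=\cl(A+C)=\cb{y\in Y\st \cb{y}+\Int C\subseteq A+\Int C}$ and the equivalence of weak minimality in $\Cl_+ A$ with $y\notin A+\Int C$ rest on the inclusion $\cl(A+C)+\Int C\subseteq A+\Int C$, and the bookkeeping for (ii), (iv), (v), (vii) and (ix) goes through as you describe. (For (viii) it is cleaner to read $y\in\Inf(\alpha A)\iff y/\alpha\in\Inf A$ directly off (i); the route via (ii) only yields $\Inf(\alpha\Inf A)=\Inf(\alpha A)$ and still leaves you to check that $\alpha\Inf A$ is self-infimal.)

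The genuine gap is the mechanism you propose for (iii) and (vi). Zorn's lemma on $\Cl_+ A\cap(y-\Int C)$ requires every $\leq$-chain to have a lower bound in that set, and topological closedness of $\Cl_+ A$ does not provide this in a general partially ordered linear topological space: an order-decreasing chain need not converge (there is no order-completeness, boundedness or compactness hypothesis anywhere), so it may have no lower bound at all, let alone one staying inside $y-\Int C$. Likewise, Corollary 1.44 of \cite{Loe2012-book} only asserts $\wMin\Cl_+ A\neq\emptyset$; no translation turns this into the existence of a weakly minimal element \emph{below a prescribed} $y\in A+\Int C$. The correct, and far more elementary, argument is one-dimensional, along the fixed direction $c\in\Int C$: for $y\in A+\Int C$ the set $T=\cb{t\geq 0\st y-tc\in\Cl_+ A}$ is an interval containing a right neighbourhood of $0$ (openness of $\Int C$ and $\Cl_+ A+C\subseteq\Cl_+ A$), is closed (continuity of $t\mapsto y-tc$), and is bounded above, since otherwise every $w\in Y$ would lie in $(y-tc)+\Int C\subseteq A+\Int C$ for large $t$, contradicting $\Cl_+ A\neq Y$. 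Its right endpoint $\bar t>0$ yields $z=y-\bar tc\in\Cl_+ A\setminus(A+\Int C)=\Inf A$ with $y-z=\bar tc\in\Int C$, which is exactly the missing half of (iii). The symmetric exit-point argument along $t\mapsto y+tc$, now using $\Cl_+ A\neq\emptyset$ for nonemptiness of the corresponding interval, supplies (vi). With this replacement of the Zorn step the proof is complete.
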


\begin{proposition}\label{pr_3}
Let $A_i \subset  \overline{Y}$ for $i \in I$, where $I$ is an arbitrary index set. Then
\begin{enumerate}[(i)]
\item $\displaystyle \Cl_+ \bigcup_{i\in I} A_i = \Cl_+ \bigcup_{i\in I} \Cl_+ A_i$,
\item $\displaystyle \Inf \bigcup_{i\in I} A_i = \Inf \bigcup_{i\in I} \Inf A_i$.
\end{enumerate}
\end{proposition}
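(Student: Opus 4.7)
The plan is to prove (i) directly by a short case-analysis based on the simplified formula \eqref{eq_clplus}, and then deduce (ii) from (i) together with the identities in Proposition~\ref{pr}~(ix). I would first dispatch the two degenerate cases. If $-\infty \in \bigcup_{i\in I} A_i$, then $-\infty \in A_{i_0}$ for some $i_0$, so $\Cl_+ A_{i_0} = Y$, hence $\bigcup_i \Cl_+ A_i = Y$ as a subset of $\overline Y$, and both sides of (i) evaluate to $Y$. If $\bigcup_i A_i \subseteq \cb{+\infty}$, then each $A_i$ is either empty or $\cb{+\infty}$, so each $\Cl_+ A_i = \emptyset$, and both sides of (i) are $\emptyset$.

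The substantive case is when $-\infty \notin \bigcup_i A_i$ and $\bigcup_i A_i \not\subseteq \cb{+\infty}$. By \eqref{eq_clplus}, the identity to establish is
\[
\cl\ofgg{\bigcup_{i\in I} (A_i\setminus\cb{+\infty}) + C} \;=\; \cl\ofgg{\bigcup_{i\in I} \Cl_+ A_i + C},
\]
where for each $i$ we have $\Cl_+ A_i$ equal to $\emptyset$ or to $\cl(A_i\setminus\cb{+\infty}+C)$. For the inclusion ``$\subseteq$'' I would use that $A_i\setminus\cb{+\infty} \subseteq \Cl_+ A_i$ (a point $y$ in $A_i\setminus\cb{+\infty}$ lies in $A_i\setminus\cb{+\infty}+C$ since $0\in C$... wait, $C$ need not contain $0$ but $\cl(A_i\setminus\cb{+\infty}+C)$ does contain $A_i\setminus\cb{+\infty}$ since $y$ is a limit of $y+\tfrac1n c$ for any $c\in\Int C$). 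Taking the union over $i$, adding $C$, and closing gives the desired inclusion. For ``$\supseteq$'' I would use that $C+C\subseteq C$ and the elementary fact $\cl S + C \subseteq \cl(S+C)$, which together yield $\Cl_+ A_i + C \subseteq \Cl_+ A_i$. Distributing the union across the sum, $\bigcup_i \Cl_+ A_i + C \subseteq \bigcup_i \Cl_+ A_i = \bigcup_i \cl(A_i\setminus\cb{+\infty}+C) \subseteq \cl(\bigcup_i (A_i\setminus\cb{+\infty}) + C)$, and closing both sides completes the argument.

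For (ii), I would apply (i) with the family $(\Inf A_i)_{i\in I}$ in place of $(A_i)_{i\in I}$ and combine with the identities $\Cl_+\Inf A = \Cl_+ A$ and $\Inf\Cl_+ A = \Inf A$ from Proposition~\ref{pr}~(ix). Concretely,
\[
\Cl_+ \bigcup_{i\in I}\Inf A_i \;=\; \Cl_+\bigcup_{i\in I}\Cl_+\Inf A_i \;=\; \Cl_+\bigcup_{i\in I}\Cl_+ A_i \;=\; \Cl_+\bigcup_{i\in I}A_i,
\]
where the first equality is (i) applied to $(\Inf A_i)$, the second uses $\Cl_+\Inf A_i = \Cl_+ A_i$, and the third is (i) applied to $(A_i)$. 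Applying $\Inf$ to both ends and using $\Inf\Cl_+ X = \Inf X$ yields (ii).

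The only real obstacle is the accurate bookkeeping of the three-case definition of $\Cl_+$ and the conventions for $\pm\infty$ and $\emptyset$; once one sees that the degenerate cases are forced to agree on both sides, the main case reduces to the elementary lemma $\Cl_+ A + C \subseteq \Cl_+ A$, and everything else is a rearrangement of unions with closures. I would expect the write-up to be essentially a single short display for each direction of (i), plus the three-line chain above for (ii).
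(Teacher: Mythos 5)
The paper states Proposition~\ref{pr_3} without proof (it is imported from the book \cite{Loe2012-book}), so there is no in-paper argument to compare against; judged on its own, your proof is correct and complete. The degenerate cases are handled consistently with the conventions ($\Cl_+\emptyset=\Cl_+\cb{+\infty}=\emptyset$, $\Cl_+ A\subseteq Y$ always, so $-\infty\in\bigcup_i A_i$ forces both sides to equal $Y$), and in the main case the two inclusions rest on exactly the right elementary facts: $A_i\setminus\cb{+\infty}\subseteq\Cl_+ A_i$ for ``$\subseteq$'', and $\Cl_+ A_i + C\subseteq \Cl_+ A_i$ together with $\bigcup_i\cl S_i\subseteq\cl\bigcup_i S_i$ for ``$\supseteq$''. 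Your parenthetical worry about $0\in C$ is moot: since $C$ induces a partial order, reflexivity gives $0\in C$, and in any case your limit argument $y=\lim_n(y+\tfrac1n c)$ works because $C$ is a cone with $\Int C\neq\emptyset$. The derivation of (ii) from (i) via $\Cl_+\Inf A=\Cl_+ A$ and $\Inf\Cl_+ A=\Inf A$ is the natural route and is exactly how such statements are reduced to the $\Cl_+$ level in \cite{Loe2012-book}; no gaps.
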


Using the set $\wMax A:= -\wMin (-A)$ of weakly maximal elements of $A\subseteq  Y$, one can define likewise 
the lower closure $\Cl_-A$ and the set $\Sup A$ of supremal elements of $A \subseteq \overline{Y}$ and there are analogous statements.

We introduce the following ordering relation for sets $A,B \subseteq \overline{Y}$:
$$ A \preccurlyeq B \;:\iff\; \Cl_+ A \supseteq \Cl_+ B. $$

Given a partially ordered set $(Z,\le)$, we say that $\bar z \in Z$ is a {\em lower bound} of $A \subseteq Z$ if $\bar z \le a$ for all $a \in A$. The element $\bar z \in Z$ is called the infimum of $A\subseteq Z$ (written $\bar z = \inf A$) if $\bar z$ is a lower bound of $A$ and if $\hat z \le \bar z$ holds for every other lower bound $\hat z$ of $A$. As the ordering $\le$ is antisymmetric, the infimum, if it exists, is uniquely defined. The partially ordered set $(Z,\le)$ is called a {\em complete lattice} if every subset of $Z$ has an infimum and a supremum, where the supremum is defined analogously and is denoted by $\sup A$. 

The following result has been established in \cite{LoeTam07}, a proof can be also found in \cite[Theorem 1.54]{Loe2012-book}. We denote by $\I$ the space of all self-infimal subsets of $\overline{Y}$, where $A\subseteq \overline{Y}$ is called {\em self-infimal} if $A=\Inf A$.

\begin{theorem} [\cite{LoeTam07}]\label{th_1}
The partially ordered set $(\I, \preccurlyeq)$ provides a complete lattice. For nonempty sets $\A \subseteq \I$ it holds
$$ \inf \A = \Inf \bigcup_{A \in \A} A , \qquad \sup \A = \Sup \bigcup_{A \in \A} A.$$
\end{theorem}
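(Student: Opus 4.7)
The plan is to verify three things in order: (a) $\preccurlyeq$ is a partial order on $\I$, (b) the candidate $\bar A:=\Inf\bigcup_{A\in\A}A$ belongs to $\I$ and is a lower bound of $\A$, and (c) it is the greatest such. The supremum claim will then follow by applying the same argument to the dual notions $\Cl_-$ and $\Sup$.

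Reflexivity and transitivity of $\preccurlyeq$ are immediate from set inclusion. For antisymmetry on $\I$, if $A,B\in\I$ satisfy $A\preccurlyeq B$ and $B\preccurlyeq A$, then $\Cl_+A=\Cl_+B$; applying $\Inf$ to both sides and using $\Inf\Cl_+ =\Inf$ together with self-infimality (both from Proposition \ref{pr}(ix)) gives $A=\Inf A=\Inf\Cl_+A=\Inf\Cl_+B=\Inf B=B$. So $\preccurlyeq$ is indeed a partial order on $\I$; this is the one step that genuinely uses that we restrict to self-infimal sets.

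Now fix a nonempty family $\A\subseteq\I$ and set $\bar A:=\Inf\bigcup_{A\in\A}A$. By Proposition \ref{pr}(ix) we have $\Inf\bar A=\Inf\Inf\bigcup A=\Inf\bigcup A=\bar A$, so $\bar A\in\I$. For any $B\in\A$, the inclusion $B\subseteq\bigcup_{A\in\A}A$ yields $\Cl_+B\subseteq\Cl_+\bigcup_{A\in\A}A$, and applying $\Cl_+\Inf=\Cl_+$ on the right gives $\Cl_+B\subseteq\Cl_+\bar A$, i.e., $\bar A\preccurlyeq B$. Hence $\bar A$ is a lower bound of $\A$. To see it is the greatest one, let $\hat A\in\I$ satisfy $\hat A\preccurlyeq B$ for every $B\in\A$. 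Then $\Cl_+\hat A\supseteq\Cl_+B$ for all $B\in\A$, so $\Cl_+\hat A\supseteq\bigcup_{B\in\A}\Cl_+B$; taking $\Cl_+$ on both sides and invoking Proposition \ref{pr_3}(i) produces $\Cl_+\hat A\supseteq\Cl_+\bigcup_{B\in\A}\Cl_+B=\Cl_+\bigcup_{B\in\A}B=\Cl_+\bar A$, that is, $\hat A\preccurlyeq\bar A$. This establishes $\inf\A=\bar A$.

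The analogous supremum formula follows by the same bookkeeping applied to $\Sup$ and $\Cl_-$: one defines the symmetric ordering via $A\preccurlyeq B\iff\Cl_+A\supseteq\Cl_+B$ (which is equivalent to $\Cl_-B\supseteq\Cl_-A$ in the situation where both make sense, because the supremum of $\A$ with respect to $\preccurlyeq$ is the $\preccurlyeq$-least upper bound), and repeats the argument with $\Sup$ replacing $\Inf$. Finally, for the complete lattice property to hold in full, the empty family is handled by noting $\Inf\emptyset=\{+\infty\}$ and $\Sup\emptyset=\{-\infty\}$, which lie in $\I$ and serve as top and bottom element respectively. The only subtle point in the whole argument is antisymmetry, which is why the lattice lives on $\I$ rather than on all subsets; every other step is a direct application of Proposition \ref{pr} and Proposition \ref{pr_3}.
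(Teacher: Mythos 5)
Your argument for the infimum half is correct and complete: antisymmetry via $\Inf\Cl_+A=\Inf A=A$ for $A\in\I$, membership $\bar A\in\I$ via $\Inf\Inf=\Inf$, the lower-bound property via monotonicity of $\Cl_+$ together with $\Cl_+\Inf=\Cl_+$, and maximality via Proposition \ref{pr_3}(i) are all sound. (Note that the paper itself does not prove Theorem \ref{th_1}; it cites \cite{LoeTam07} and \cite[Theorem 1.54]{Loe2012-book}, so there is no in-paper argument to compare against.)

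The supremum half, however, has a genuine gap. The order $\preccurlyeq$ is defined asymmetrically, through $\Cl_+$ only. Running your ``same bookkeeping'' with $\Sup$ and $\Cl_-$ shows that $\Sup\bigcup_{A\in\A}A$ is the least upper bound with respect to the \emph{other} order $A\preccurlyeq' B\iff\Cl_-A\subseteq\Cl_-B$, which is a priori a different relation. To conclude you need two facts that you assert but do not establish: (1) for $A,B\in\I$ one has $\Cl_+A\supseteq\Cl_+B\iff\Cl_-A\subseteq\Cl_-B$, and (2) $\Sup\bigcup_{A\in\A}A$ is itself self-infimal, i.e., lies in $\I$ at all. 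Neither is a formal symmetry; both rely on the structure of self-infimal sets, essentially the disjoint decomposition $Y=\Inf A\cup(\Inf A+\Int C)\cup(\Inf A-\Int C)$ of Proposition \ref{pr}(\ref{pr_0_5}),(\ref{pr_0_6}) (e.g., to show $B+\Int C\subseteq A+\Int C$ implies $A-\Int C\subseteq B-\Int C$ one must rule out, via the trichotomy, that a point of $A-\Int C$ lands in $B$ or in $B+\Int C$). Your parenthetical justification (``because the supremum of $\A$ with respect to $\preccurlyeq$ is the $\preccurlyeq$-least upper bound'') is circular and does not address either point. A cleaner escape for mere \emph{completeness} of the lattice is the standard fact that existence of all infima already implies existence of all suprema via $\sup\A=\inf\{\text{upper bounds of }\A\}$; but the explicit formula $\sup\A=\Sup\bigcup_{A\in\A}A$, which is part of the statement, still requires the duality lemmas above.
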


This theorem allows to formulate duality statements analogous to their scalar counterparts by using the infimum and supremum in a complete lattice. An easy consequence, for instance, is that for any function $L: X \times U \rra \overline{Y}$ one has
$$ \Sup\bigcup_{u \in U} \Inf\bigcup_{x\in X} L(x,u) \preccurlyeq \Inf\bigcup_{x\in X} \Sup\bigcup_{u \in U} L(x,u).$$
Another consequence is that, for an arbitrary function $f:X\rra \overline{Y}$ and $A \subseteq B \subseteq X$, one has
$$ \Inf\bigcup_{x\in A} f(x) \succcurlyeq \Inf\bigcup_{x\in B} f(x)  
\quad\text{ and }\quad
 \Sup\bigcup_{x\in A} f(x) \preccurlyeq \Sup\bigcup_{x\in B} f(x).$$
A further useful consequence is the following result, which is a reformulation of \cite[Proposition 1.56]{Loe2012-book}.

\begin{proposition}\label{pr_2}
For nonempty subsets $\A,\B \subseteq \overline{Y}$ we have
\begin{enumerate}[(i)]
\item \label{pr_2_1}$\displaystyle \Inf\bigcup_{ A \in \A,\; B \in \B} (A + B) =  \Inf\of{\Inf \bigcup_{A\in \A} A + \Inf\bigcup_{B\in\B} B }$,
\item \label{pr_2_2}$\displaystyle \Sup\bigcup_{ A \in \A,\; B \in \B} (A + B) \preccurlyeq  \Sup \bigcup_{A\in \A} A + \Sup\bigcup_{B\in\B} B $.
\end{enumerate}
\end{proposition}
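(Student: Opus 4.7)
The plan is to reduce both statements to Proposition~\ref{pr}(vii) and its supremum-side counterpart via the substitution
$$ \bigcup_{A \in \A,\; B \in \B}(A + B) \;=\; U + V, \qquad U := \bigcup_{A \in \A} A, \quad V := \bigcup_{B \in \B} B, $$
where $U + V$ denotes the Minkowski sum. After this substitution, (i) becomes $\Inf(U + V) = \Inf(\Inf U + \Inf V)$ and (ii) becomes $\Sup(U + V) \preccurlyeq \Sup U + \Sup V$.

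For (i), the reduced identity is precisely Proposition~\ref{pr}(vii) applied with $A := U$ and $B := V$. The degenerate boundary cases, where $\Cl_+ U$ or $\Cl_+ V$ is $\emptyset$ or $Y$, are handled by the $\pm\infty$ conventions listed right after the definition of $\Inf$: for instance, if $U = \emptyset$ then $\bigcup_{A,B}(A+B) = \emptyset$ and both sides equal $\{+\infty\}$, whereas if $\Cl_+ U = Y$ then $\Inf U = \{-\infty\}$ and one checks directly that both sides equal $\{-\infty\}$.

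For (ii), I would first dualize Proposition~\ref{pr}(vii) (either by replacing $C$ by $-C$ or, equivalently, via the identity $\Sup X = -\Inf(-X)$) to obtain the Sup-analog
$$ \Sup(U + V) \;=\; \Sup(\Sup U + \Sup V). $$
Setting $M := \Sup U + \Sup V$, the remaining task is to verify $\Sup M \preccurlyeq M$, i.e.\ $\Cl_+ \Sup M \supseteq \Cl_+ M$. I would attempt this by a pointwise comparison using the decomposition $\Cl_+ A = \Inf A \cup (\Inf A + \Int C)$ from Proposition~\ref{pr}(iv) (together with parts (iii), (v), (vi) which control the position of $M$ relative to $\Sup M$ modulo $\Int C$), applied to both $A = M$ and $A = \Sup M$.

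The main obstacle lies in this last step. The asymmetry with (i) is that the right-hand side of (ii) carries no outer $\Sup$, so the passage from the equality $\Sup(U + V) = \Sup(\Sup U + \Sup V)$ to the stated inequality is not automatic and demands a separate comparison of upper closures; the degenerate cases where $\Cl_+ M \in \{\emptyset, Y\}$ must also be treated by hand using the conventions on $\pm\infty$.
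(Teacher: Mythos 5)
Your part (i) is essentially correct and is, as far as one can tell, the intended argument (note the paper itself gives no proof here, importing the statement from \cite[Proposition 1.56]{Loe2012-book}): the union of Minkowski sums collapses to $U+V$, Proposition \ref{pr} \eqref{pr_0_7} gives the identity in the nondegenerate case, and the conventions on $\pm\infty$ settle the remaining branches. One small slip: when $\Cl_+ U=Y$ the two sides need not equal $\cb{-\infty}$; if $V=\emptyset$ or $V=\cb{+\infty}$ they both equal $\cb{+\infty}$, so that branch needs a further case split.

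For part (ii), the obstacle you flag is not a technical inconvenience but a step that is actually false. Take $Y=\R^2$, $C=\R^2_+$, $U=\cb{(0,1),(1,0)}$ and $V=\cb{(t,-t)\st t\in\R}$. Then $\Sup V=V$ and $\Sup U$ is the staircase $\cb{(t,1)\st t\le 0}\cup\cb{(0,s)\st 0\le s\le 1}\cup\cb{(t,0)\st 0\le t\le 1}\cup\cb{(1,s)\st s\le 0}$, whose points have coordinate sums filling $(-\infty,1]$; hence $M=\Sup U+\Sup V=\cb{(a,b)\st a+b\le 1}$. Now $\Cl_+M=Y$ while $\Cl_+\Sup M=\cb{(a,b)\st a+b\ge 1}$, so $\Sup M\not\preccurlyeq M$, and your route (dualize \eqref{pr_0_7}, then discard the outer $\Sup$) cannot be completed. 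The same data show that the inequality with the raw Minkowski union on the left in fact fails: $\Sup(U+V)=\cb{(a,b)\st a+b=1}\not\preccurlyeq M$. This reveals that your opening reduction, though a valid set identity, is the wrong move for (ii): the left-hand side has to be read as the lattice supremum of the values $\Inf(A+B)$ for families $\A,\B\subseteq\I$, that is, as $\Sup\bigcup_{A,B}\Inf(A+B)$, and unlike in (i) --- where Proposition \ref{pr_3}(ii) lets you drop the inner $\Inf$ --- there is no rule $\Sup\bigcup_i\Inf X_i=\Sup\bigcup_i X_i$. The proof that works is purely lattice-theoretic: $A\preccurlyeq\Sup\bigcup_{A\in\A}A$ and $B\preccurlyeq\Sup\bigcup_{B\in\B}B$ for all $A\in\A$, $B\in\B$; Minkowski addition is monotone with respect to $\preccurlyeq$ because $\Cl_+(A+B)$ depends only on $\Cl_+A$ and $\Cl_+B$ (this is exactly (i) applied to singleton families); hence every $\Inf(A+B)$ is $\preccurlyeq\Sup\bigcup_{A\in\A}A+\Sup\bigcup_{B\in\B}B$, and therefore so is the supremum of this family. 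No claim of the form $\Sup M\preccurlyeq M$ is needed anywhere.
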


\subsection{Lagrange duality} \label{sec_lagr}

In this section we recall Lagrange duality results as given in \cite[Section 3.3.2]{Loe2012-book} for optimization problems with set-valued objective function and set-valued constraints using a notation adapted to \cite{Tanino92}.

Let $X$ be a linear space and let $U$ be a separated locally convex space. Moreover, let $\scp{U,U^*}$ be a dual pair.
Let $F :  X \rra \overline{Y}$ and $G :  X \rightrightarrows  U$ be set-valued maps. We set $\dom F:=\cb{x \in X \st F(x)\neq \cb{+\infty}}$ and  $\dom G:=\cb{x \in X \st G(x)\neq \emptyset}$. Let $D \subseteq  U$ be a proper closed convex cone with nonempty interior. We denote by $D^\circ$ the negative polar cone of $D$.

We consider the following primal problem \eqref{p}:
\begin{equation}\label{p}
 \tag{P} \hspace{3cm} \bar p:=\Inf \bigcup_{x \in S} F(x),\qquad  S:=\cb{x \in  X \st G(x) \cap -D \neq \emptyset}.
\end{equation}
Constraints of this type have been investigated by many authors, such as Borwein \cite{Borwein77};
Corley \cite{Corley87}; Jahn \cite{Jahn83};  Luc \cite{Luc88}; G\"otz and Jahn \cite{GoeJah99}; Crespi, Ginchev and Rocca \cite{CreGinRoc06}; Bo\c{t} and Wanka \cite{BotWan04}.

We assume throughout that a fixed vector 
\begin{equation}\label{eq_c}
c\in \Int C
\end{equation}
is given. Several concepts, for instance, the Lagrangian, the dual objective function and subgradients will depend on the choice of this vector $c$. Note that we do not mention this dependance explicitly.
 
The Lagrangian map of problem \eqref{p} is
defined by
\begin{equation}\label{eq_lagr}
 L:  X \times  U^* \rra \overline{Y},\qquad
   L(x, u^*) = F(x) + \Inf \bigcup_{u \in G(x)+D}\scp{ u^*,u}\cb{c}.
\end{equation}
In the special case $q=1$, $C = \R_+$, $c = 1$, the well-known Lagrangian
coincides with the Lagrangian of the scalar problem. For every choice of $c \in \Int C$ we have a different Lagrangian map and a different corresponding dual problem, but we show that weak duality and strong duality hold for all these problems.

The scalar counterpart of the following result is well known.

\begin{proposition}[\cite{Loe2012-book}, Proposition 3.23] \label{pr_r45}
Let $x \in S$, then
$$ \Sup \bigcup_{ u^* \in  U^*} L(x, u^*) = F(x).$$
\end{proposition}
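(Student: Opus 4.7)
The plan is to read the equality in the complete lattice $(\I,\preccurlyeq)$ of Theorem~\ref{th_1}, that is, modulo the equivalence $A\sim B :\iff \Cl_+ A = \Cl_+ B$; in the vector setting $\Inf\{0\}=\wMin \cl C = \bd \cl C$ is not $\{0\}$, so $L(x,0)$ is not literally $F(x)$, but the two will turn out to be equivalent in the lattice. I will establish two facts: (a) $L(x,u^*)\preccurlyeq F(x)$ for every $u^*\in U^*$, so $F(x)$ is an upper bound of the family $\{L(x,u^*)\}_{u^*}$, and (b) this upper bound is attained at $u^*=0$, i.e.\ $L(x,0)\sim F(x)$. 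Combining these with Theorem~\ref{th_1}, which identifies the supremum of the family in the lattice with $\Sup\bigcup_{u^*}L(x,u^*)$, gives the claim. Both facts share the same input from feasibility: since $x\in S$, pick $u_0\in G(x)\cap(-D)$; then $0=u_0+(-u_0)\in G(x)+D$, whence $0\in A_{u^*}:=\bigcup_{u\in G(x)+D}\scp{u^*,u}\cb{c}$ for every $u^*\in U^*$.

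For~(a), item~(\ref{pr_0_3}) of Proposition~\ref{pr} yields $\Inf A_{u^*}+\Int C=A_{u^*}+\Int C\supseteq\cb{0}+\Int C=\Int C$. Adding $F(x)$ and taking closure, together with the identity $\cl(A+C)=\cl(A+\Int C)$ (valid because $C=\cl\Int C$ for a convex cone with nonempty interior), we obtain $\Cl_+ L(x,u^*)=\cl(F(x)+\Inf A_{u^*}+\Int C)\supseteq\cl(F(x)+\Int C)=\Cl_+ F(x)$, that is, $L(x,u^*)\preccurlyeq F(x)$. The degenerate case $\Cl_+ A_{u^*}=Y$ forces $\Inf A_{u^*}=\cb{-\infty}$ and $L(x,u^*)=\cb{-\infty}\preccurlyeq F(x)$ trivially. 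For~(b), $A_0=\cb{0}$, so $L(x,0)=F(x)+\bd\cl C$; using the identity $\bd\cl C+C=\cl C$ (which follows from the decomposition $\cl C=\bd\cl C\cup C$ together with $\cl C+\cl C=\cl C$), we compute $\Cl_+ L(x,0)=\cl(F(x)+\bd\cl C+C)=\cl(F(x)+\cl C)=\cl(F(x)+C)=\Cl_+ F(x)$, so $L(x,0)\sim F(x)$.

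The main obstacle is exactly the vectorial subtlety flagged in the first paragraph: unlike the scalar case, where $\Inf\cb{0}=\cb{0}$ lets $L(x,0)=F(x)$ be taken as a literal set equality, in the vector case $\Inf\cb{0}$ is the full boundary $\bd\cl C$ and $L(x,0)\neq F(x)$ as sets. The equation of the proposition must therefore be interpreted in the complete lattice $(\I,\preccurlyeq)$, and the one-line identity $\bd\cl C+C=\cl C$ is what makes the perturbation ``$+\bd\cl C$'' invisible to $\Cl_+$, thereby restoring equivalence with $F(x)$. Everything else is a routine application of Proposition~\ref{pr} and Theorem~\ref{th_1}.
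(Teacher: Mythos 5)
Your proof is correct and follows the same route as the source: the paper does not reprove this proposition but cites it from \cite{Loe2012-book}, where the argument is precisely yours --- feasibility of $x$ gives $0\in G(x)+D$, hence $\Inf\bigcup_{u\in G(x)+D}\scp{u^*,u}\cb{c}\preccurlyeq\Inf\cb{0}$, so $F(x)$ is an upper bound of the family $\cb{L(x,u^*)}$, and the bound is attained at $u^*=0$. Your additional checks that $\Inf\cb{0}=\bd\cl C$ acts as the neutral element (via $0\in\bd\cl C$ and $\bd\cl C+C=\cl C$) and that $\cl(A+C)=\cl(A+\Int C)$ are exactly what is needed to reconcile the Minkowski-sum notation of \eqref{eq_lagr} with the equality asserted in the complete lattice $(\I,\preccurlyeq)$, where $F(x)$ is understood to be self-infimal as in Proposition \ref{pr_u20}.
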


We next define the dual problem. The dual objective function is defined by
$$  \phi:  U^* \rra \overline{Y}, \qquad \phi( u^*):= \Inf \bigcup_{x \in  X} L(x, u^*)$$
and the dual problem (with respect to $c \in \Int C$) associated to \eqref{p} is defined by
\begin{equation}\label{d}
 \tag{D} \bar d:=\Sup \bigcup_{ u^* \in  U^*} \phi(u^*).
\end{equation}

As shown in \cite[Theorem 3.25]{Loe2012-book}, we have weak duality. Taking into account Theorem \ref{th_1}, we get the following formulation.

\begin{theorem}[weak duality] \label{th_1w}  The problems \eqref{p} and \eqref{d} satisfy the weak duality inequality
$$\Sup \bigcup_{ u^* \in  U^*} \phi(u^*) \preccurlyeq \Inf \bigcup_{x \in S} F(x).$$
\end{theorem}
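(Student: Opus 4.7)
The statement follows by chaining together three of the general facts recorded just after Theorem~\ref{th_1} together with Proposition~\ref{pr_r45}. The plan is essentially to carry out the classical ``min-max $\ge$ max-min, then restrict to the feasible set, then use the sup-characterization of the primal objective'' argument, but carefully in the lattice $(\I,\preccurlyeq)$.

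First I would unfold the dual objective in the left-hand side, writing
\[
 \Sup\bigcup_{u^*\in U^*}\phi(u^*) \;=\; \Sup\bigcup_{u^*\in U^*}\Inf\bigcup_{x\in X}L(x,u^*).
\]
Now I would invoke the general min-max type inequality stated after Theorem~\ref{th_1} (which is itself a direct consequence of the complete lattice structure of $(\I,\preccurlyeq)$), applied to the Lagrangian $L:X\times U^*\rra \overline{Y}$, to obtain
\[
 \Sup\bigcup_{u^*\in U^*}\Inf\bigcup_{x\in X}L(x,u^*)\;\preccurlyeq\;\Inf\bigcup_{x\in X}\Sup\bigcup_{u^*\in U^*}L(x,u^*).
\]

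Second, I would pass from the unconstrained index set $X$ to the feasible set $S$. Since $S\subseteq X$, the monotonicity property of $\Inf$ with respect to the index set (the second consequence listed after Theorem~\ref{th_1}: larger union $\Rightarrow$ smaller $\Inf$ in the sense of $\preccurlyeq$) yields
\[
 \Inf\bigcup_{x\in X}\Sup\bigcup_{u^*\in U^*}L(x,u^*)\;\preccurlyeq\;\Inf\bigcup_{x\in S}\Sup\bigcup_{u^*\in U^*}L(x,u^*).
\]

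Finally I would apply Proposition~\ref{pr_r45}, which states that for every feasible $x\in S$ the inner supremum of the Lagrangian equals $F(x)$. Substituting this pointwise identity inside the outer $\Inf$ replaces the right-hand side with $\Inf\bigcup_{x\in S}F(x)$, and concatenating the three $\preccurlyeq$ relations gives the claimed weak duality inequality.

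I do not expect any real obstacle: the argument is purely formal once Theorem~\ref{th_1} and Proposition~\ref{pr_r45} are available, and no convexity or regularity is needed (this is consistent with the introduction's remark that weak duality comes essentially for free). The only point requiring a bit of care is notational: $\Sup\bigcup_{u^*}L(x,u^*)$ is itself a subset of $\overline{Y}$, so that ``substituting $F(x)$'' in the last step should be read as the equality of the two unions $\bigcup_{x\in S}\Sup\bigcup_{u^*}L(x,u^*)=\bigcup_{x\in S}F(x)$ inside the outer $\Inf$, which is immediate from Proposition~\ref{pr_r45}.
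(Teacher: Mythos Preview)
Your proposal is correct and follows essentially the same route as the paper. Although the paper does not spell out a proof of Theorem~\ref{th_1w} itself (it cites \cite[Theorem 3.25]{Loe2012-book}), it gives exactly your three-step argument---min-max inequality from Theorem~\ref{th_1}, monotonicity of $\Inf$ when passing from $X$ to $S$, and then Proposition~\ref{pr_r45}---when proving the operator analogue Theorem~\ref{th_1wL}.
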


The following strong duality theorem has been proven in \cite[Theorem 3.26]{Loe2012-book} by using scalarization and a scalar Lagrange duality result. We present a reformulation based on Theorem \ref{th_1}.

\begin{theorem}[strong duality]\label{th_ld}
Let $F$ be $C$-convex, let $G$ be $D$-convex, and let
\begin{equation}\label{eq_sl5}
   G(\dom F) \cap - \Int D \neq \emptyset.
\end{equation}
Then, strong duality holds, that is, 
$$\Sup \bigcup_{ u^* \in  U^*} \phi(u^*) = \Inf \bigcup_{x \in S} F(x).$$
\end{theorem}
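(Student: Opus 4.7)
The plan is to reduce vector strong duality to the classical scalar Lagrange duality via scalarization along $c \in \Int C$. By weak duality (Theorem \ref{th_1w}) combined with the antisymmetry of $(\I, \preccurlyeq)$, the task amounts to verifying
$$\Cl_+ \Sup \bigcup_{u^* \in U^*} \phi(u^*) \subseteq \Cl_+ \bigcup_{x \in S} F(x),$$
where on the right I used the identity $\Cl_+ \Inf A = \Cl_+ A$ from Proposition \ref{pr}. Since $\Sup \bigcup_{u^*} \phi(u^*)$ is the lattice supremum of $\{\phi(u^*)\}_{u^* \in U^*}$ in $\I$ (Theorem \ref{th_1}), its upper closure coincides with $\bigcap_{u^* \in U^*} \Cl_+ \phi(u^*)$, via the order-reversing correspondence between self-infimal sets and closed $C$-up-sets.

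I argue by contradiction. Suppose $\bar y \in \bigcap_{u^*} \Cl_+ \phi(u^*) \setminus \Cl_+ \bigcup_{x \in S} F(x)$. By $D$-convexity of $G$, $S$ is convex; by $C$-convexity of $F$ the set $M := \bigcup_{x \in S} F(x) + C$ is convex; by \eqref{eq_sl5}, $M$ is nonempty and proper in $Y$. Hence $\Cl_+ \bigcup_{x \in S} F(x) = \cl M$ is a closed convex proper $C$-up-set, and Hahn--Banach separation yields $y^* \in Y^* \setminus \{0\}$ with $\langle y^*, \bar y\rangle$ strictly less than $p(y^*) := \inf_M \langle y^*, \cdot\rangle$. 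Since $\cl M$ is a $C$-up-set, $y^* \in C^+ := \{z^* \in Y^* : \langle z^*, c'\rangle \geq 0 \text{ for all } c' \in C\}$, and rescaling gives $\langle y^*, c\rangle = 1$ (using $c \in \Int C$).

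Next I apply classical scalar Lagrange strong duality to the convex scalar problem $\inf_{x \in S,\, v \in F(x)} \langle y^*, v\rangle$: convexity follows from $y^* \in C^+$ combined with $C$-convexity of $F$, and Slater's condition is exactly \eqref{eq_sl5}. This yields
$$p(y^*) = \sup_{u^* \in -D^\circ} \inf_{x \in X} \bigl[\inf_{v \in F(x)} \langle y^*, v\rangle + \inf_{w \in G(x)} \langle u^*, w\rangle\bigr].$$
A routine computation, using $\langle y^*, c\rangle = 1$, definition \eqref{eq_lagr}, and Proposition \ref{pr}\eqref{pr_0_3} (which forces $\inf \langle y^*, \cdot\rangle$ to agree on any set and on its $\Inf$), identifies the bracketed term with $\inf_{v \in L(x, u^*)} \langle y^*, v\rangle$ for $u^* \in -D^\circ$, while for $u^* \notin -D^\circ$ it evaluates to $-\infty$ and contributes nothing. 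Consequently $p(y^*) = \sup_{u^* \in U^*} \inf_{v \in \phi(u^*)} \langle y^*, v\rangle$. Since $\bar y \in \Cl_+ \phi(u^*) = \cl(\phi(u^*) + C)$ and $y^* \in C^+$, we have $\langle y^*, \bar y\rangle \geq \inf_{v \in \phi(u^*)} \langle y^*, v\rangle$ for every $u^*$; taking the supremum contradicts $\langle y^*, \bar y\rangle < p(y^*)$.

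The main obstacle will be the scalarization identities in the third paragraph: computing $\inf \langle y^*, \cdot\rangle$ over the ray-like set $\Inf \bigcup_{u \in G(x)+D} \langle u^*, u\rangle\{c\}$ (where the normalization $\langle y^*, c\rangle = 1$ together with the weakly-minimal structure along the direction $c \in \Int C$ is crucial) and justifying the lattice identity $\Cl_+ \Sup \bigcup_{u^*} \phi(u^*) = \bigcap_{u^*} \Cl_+ \phi(u^*)$ in full generality. The remaining ingredients---convexity, the Slater condition, and Hahn--Banach separation---are used exactly as in the classical scalar setting.
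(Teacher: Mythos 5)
Your argument is correct in its essentials, but it is a genuinely different route from the one taken in this paper --- in fact it is, in substance, the scalarization proof of \cite[Theorem 3.26]{Loe2012-book} that the paper explicitly sets out to complement. You separate the point $\bar y\notin\Cl_+ F(S)$ from the convex set $F(S)+C$ in $Y$ alone, normalize $\scp{y^*,c}=1$, and then invoke scalar Lagrange duality for the scalarized problem $\inf_{x\in S}\inf_{v\in F(x)}\scp{y^*,v}$, finishing with the lattice identity $\Cl_+\Sup\bigcup_{u^*}\phi(u^*)=\bigcap_{u^*}\Cl_+\phi(u^*)$. The paper instead introduces the perturbation map $W$, positive subgradients $\partial^+W(0,y)$ and stability: Theorem \ref{th_5_1_0} shows that stability implies strong duality via Lemma \ref{lem_property}, and Theorem \ref{th_5_1} derives stability from convexity and \eqref{eq_sl5} by a \emph{single} separation in the product space $Y\times U$, which produces the pair $(y^*,u^*)$ simultaneously and shows that for \emph{every} $\bar y\in W(0)$ there is $u^*$ with $\bar y\in\phi(u^*)$. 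That pointwise dual attainment (i.e., stability) is strictly more information than the set equality $\bar d=\bar p$ you obtain; your approach buys modularity (scalar duality as a black box, with the separation hidden inside it) at the cost of that extra conclusion. Two points in your write-up still need care. First, Eidelheit separation of $\cb{\bar y}$ from the convex set $M=F(S)+C$ with nonempty interior gives only $\scp{y^*,\bar y}\le\inf_M\scp{y^*,\cdot}$; to get the strict inequality you need, use Proposition \ref{pr}\eqref{pr_0_5},\eqref{pr_0_6} to write $\bar y=z-c'$ with $z\in\Inf F(S)$, $c'\in\Int C$, separate at $\bar y+\tfrac12 c'\notin\Cl_+F(S)$ instead, and then exploit $\scp{y^*,c'}>0$. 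Second, the identity $\Cl_+\Sup\bigcup_{u^*}\phi(u^*)=\bigcap_{u^*}\Cl_+\phi(u^*)$ is true but is not among the quoted propositions; it requires checking that an intersection of sets $N$ with $\Cl_+N=N$ again satisfies this property (so that the intersection is itself the upper closure of a self-infimal set, namely of its own $\Inf$), together with the $\pm\infty$ conventions. Both gaps are fillable with standard arguments, so the proposal stands as an alternative, scalarization-based proof rather than a reconstruction of the paper's stability-based one.
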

By strong duality and Theorem \ref{th_1} we get
\[ \Sup\bigcup_{u^*\in U^*} \Inf\bigcup_{x\in X} L(x,u^*) =\Inf\bigcup_{x\in X} \Sup\bigcup_{u^*\in U^*} L(x,u^*).\]
The next statement extends Proposition \ref{pr_r45}. Note that the additional assumption of $G(x)+D$ being a closed convex set originates from the set-valued constraints. It cannot be omitted even if the objective function would be scalar-valued, see \cite[Example 3.21]{Loe2012-book}.
\begin{proposition}[\cite{Loe2012-book}, Proposition 3.24] \label{pr_u20}
Let $F:X\rra\overline{Y}$ be a set valued map with $F(x)=\Inf F(x)\neq \cb{-\infty}$ for all $x \in X$, $\dom F \neq \emptyset$ and let the set $G(x)+D$ be closed and convex for every $x \in X$, then
$$ \Sup\bigcup_{u^* \in U^*} L(x,u^*) = \left\{ \begin{array}{clc}
                                      F(x) & \mbox{ if }&  x \in S \\
                                       \cb{+\infty} & \mbox{ else. } &
                                       \end{array} \right. $$
\end{proposition}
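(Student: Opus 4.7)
The plan is to split the argument by cases on whether $x\in S$. The case $x\in S$ is exactly Proposition~\ref{pr_r45}, so the real work is to establish $\Sup\bigcup_{u^*\in U^*}L(x,u^*)=\cb{+\infty}$ whenever $x\notin S$.

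For that case I would first dispatch the degenerate subcases: if $G(x)=\emptyset$, then $G(x)+D=\emptyset$, the inner union in the Lagrangian is empty, $\Inf\emptyset=\cb{+\infty}$, and so $L(x,u^*)=\cb{+\infty}$ for every $u^*$; likewise if $F(x)=\cb{+\infty}$. Thus I may assume $F(x)\subseteq Y$ and that $G(x)+D$ is a nonempty closed convex subset of the locally convex space $U$ with $0\notin G(x)+D$. A strict Hahn--Banach separation of $\cb{0}$ from $G(x)+D$ then furnishes $u_0^*\in U^*$ and $\beta>0$ with $\scp{u_0^*,u}\ge\beta$ for every $u\in G(x)+D$.

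Next I would pin down a specific element of the inner infimal set. Setting $T:=\cb{\scp{u_0^*,u}\st u\in G(x)+D}\subseteq[\beta,+\infty)$ and $M:=\Inf\bigcup_{t\in T}\cb{tc}$, a short computation using~\eqref{eq_clplus} together with $c\in C$ gives $\Cl_+\bigcup_{t\in T}\cb{tc}=\beta c+C$; then $\beta c\in\wMin(\beta c+C)=M$, where the weak-minimality uses $-\Int C\cap C=\emptyset$ (which in turn follows from $C\ne Y$). By Proposition~\ref{pr}(viii) one has $L(x,\lambda u_0^*)=F(x)+\lambda M$ for every $\lambda>0$, so fixing any $f_0\in F(x)$ gives $f_0+\lambda\beta c\in L(x,\lambda u_0^*)$ for all $\lambda>0$.

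To finish, set $A:=\bigcup_{u^*\in U^*}L(x,u^*)$ and fix an arbitrary target $y\in Y$. Since $c\in\Int C$ and $\beta>0$, one has $\lambda\beta c-(y-f_0)\in\Int C\subseteq C$ for all sufficiently large $\lambda$, whence $y\in f_0+\lambda\beta c-C\subseteq A-C$. So $Y\subseteq A-C$, $\Cl_-A=Y$, and the analogous threefold definition of $\Sup$ forces $\Sup A=\cb{+\infty}$. The main delicate point I anticipate is the separation step: obtaining $\beta$ strictly positive is exactly what enables the positive multiples $\lambda\beta c$ to drive $L(x,\lambda u_0^*)$ arbitrarily far in the direction $c\in\Int C$.
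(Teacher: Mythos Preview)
The paper does not supply its own proof of Proposition~\ref{pr_u20}; it is quoted verbatim from \cite[Proposition~3.24]{Loe2012-book}. So there is no in-paper argument to compare against, and your task reduces to whether the proposal stands on its own.

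Your overall strategy is sound and is exactly the natural one: for $x\in S$ invoke Proposition~\ref{pr_r45}; for $x\notin S$ separate $0$ from the closed convex set $G(x)+D$, obtain a direction $u_0^*$ along which the Lagrangian can be pushed arbitrarily far in the direction $c$, and conclude $\Cl_- A=Y$. The handling of the degenerate subcases ($G(x)=\emptyset$ or $F(x)=\cb{+\infty}$) via the inf-addition conventions is fine, as is the final ``absorption'' step showing $Y\subseteq A-C$.

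There is one small but genuine slip. Hahn--Banach gives you some $\beta>0$ with $T\subseteq[\beta,\infty)$, but the identity $\Cl_+\bigl(\bigcup_{t\in T}\cb{tc}\bigr)=\beta c+C$ that you assert is only correct with $\beta$ replaced by $\gamma:=\inf T$ (and, strictly, with $C$ replaced by $\cl C$, since the paper does not assume $C$ closed). Indeed, from $Tc+\Int C=\gamma c+\Int C$ one gets $\Cl_+(Tc)=\gamma c+\cl C$ and then $\gamma c\in\wMin(\gamma c+\cl C)=M$, using $(-\Int C)\cap\cl C=\emptyset$. Since $\gamma\ge\beta>0$, everything downstream (the inclusion $f_0+\lambda\gamma c\in L(x,\lambda u_0^*)$ via Proposition~\ref{pr}\,(\ref{pr_0_8}) and the covering argument $\lambda\gamma c-(y-f_0)\in\Int C$ for large $\lambda$) goes through unchanged with $\gamma$ in place of $\beta$. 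So the gap is cosmetic rather than structural: simply set $\beta:=\inf_{u\in G(x)+D}\scp{u_0^*,u}>0$ from the outset and your computation becomes correct as written.
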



\section{Stability, subgradients and another proof of duality}
\label{sec_ap}

We start this section by introducing the notion of subgradient and stability for our framework. These concepts are mainly motivated by Tanino \cite{Tanino92} (see also Bo\c{t}, Grad and Wanka \cite{BotGraWan09} for numerous related results). We consider problem \eqref{p} and the related notions as introduced in Section \ref{sec_lagr}.

Let $\varphi$ be a  set-valued map  from $X \times U$ to $Y$ defined by
$$\varphi(x,u)=\left \{\begin{array}{cc}
                 F(x) & \mbox{ if }G(x)\cap (-D-u)\neq \emptyset  \\
                \emptyset   & \mbox{ else. }
               \end{array}\right.
$$
Denote by $W \colon U \rra \overline{Y}$  the  perturbation map defined by 
$$W(u)=\Inf\bigcup_{x\in X}\varphi(x,u).$$
Clearly, we have 
$$W(0)=\Inf\bigcup_{x \in S} F(x) = \bar p.$$
This leads to the following definition of a subgradient. 

\begin{definition}\label{def_subg} 
A point $u^* \in -D^\circ$ is a called {\em positive subgradient} of $W$ at
$(\bar{u},\bar{y})\in U\times Y$ with $\bar{y}\in W(\bar{x})$, written $u^* \in
\partial^+ W(\bar{u},\bar{y})$ for short, if 
$$\bar y - \scp{u^*,\bar{u}} c \in \Inf\bigcup_{u \in U}(W(u) - \scp{u^*,u} c).$$
\end{definition}
 
\begin{remark} Note that the subgradient in \cite[Definition 2.5]{LiCheWu09} is a stronger notion than the one in Definition \ref{def_subg}. In particular, in \cite[Definition 2.5]{LiCheWu09}, subgradients are operators whereas in Definition \ref{def_subg} subgradients are vectors. Furthermore, the definition of subgradients in Definition \ref{def_subg} is closely related to that given by Bo\c{t}, Grad and Wanka \cite[Definition 7.1.9 (c)]{BotGraWan09}, where the subgradients are defined using the Pareto maximum instead of the supremal set as in Definition \ref{def_subg}. However, in \cite{BotGraWan09} the so called $k$-subgradients are vectors too.
\end{remark}

\begin{lemma}\label{lem_510}
Consider the set-valued maps $\partial^+W(0,\cdot): Y \rra U^*$ and $\phi: U^* \rra \overline{Y}$. For $u^*\in U^*$ with $\phi(u^*)\subseteq Y$, one has
  $$u^* \in \partial^+W(0,y) \;\iff\; y \in \phi(u^*).$$
\end{lemma}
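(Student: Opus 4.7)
My plan is to reduce the equivalence to the single identity
\[
\phi(u^*) \;=\; \Inf \bigcup_{u \in U}(W(u) - \scp{u^*,u}c).
\]
Once this is available, the equivalence is immediate: by Definition \ref{def_subg} and the fact that $\scp{u^*,0}c = 0$, the condition $u^* \in \partial^+ W(0,y)$ reads exactly $y \in \Inf \bigcup_{u \in U}(W(u) - \scp{u^*,u}c)$, which by the identity above becomes $y \in \phi(u^*)$.

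To establish the identity, I would transform both sides to a common form. For the right-hand side, observe first that translation by a vector commutes with $\Inf$ (a direct check: both $\wMin$ and $\Cl_+$ are translation-equivariant, cf.\ \eqref{eq_clplus}). Hence
\[
W(u) - \scp{u^*,u}c \;=\; \Inf \bigcup_{x \in X}(\varphi(x,u) - \scp{u^*,u}c).
\]
Absorbing the inner $\Inf$ via Proposition \ref{pr_3}(ii) and swapping the two unions gives
\[
\Inf \bigcup_{u \in U}(W(u) - \scp{u^*,u}c) \;=\; \Inf \bigcup_{x \in X}\bigcup_{u \in U}(\varphi(x,u) - \scp{u^*,u}c).
\]
Since $\varphi(x,u) = F(x)$ precisely when $-u \in G(x)+D$ (and $\emptyset$ otherwise), the substitution $u = -v$ with $v \in G(x)+D$ converts the inner union into $F(x) + \bigcup_{v \in G(x)+D} \scp{u^*,v}\cb{c}$.

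For the left-hand side, set $H(x) := \bigcup_{v \in G(x)+D}\scp{u^*,v}\cb{c}$, so that $L(x,u^*) = F(x) + \Inf H(x)$. Proposition \ref{pr}(vii) gives $\Inf(F(x) + \Inf H(x)) = \Inf(\Inf F(x) + \Inf H(x)) = \Inf(F(x)+H(x))$, and a further application of Proposition \ref{pr_3}(ii) then yields
\[
\phi(u^*) \;=\; \Inf \bigcup_{x \in X}(F(x) + \Inf H(x)) \;=\; \Inf \bigcup_{x \in X}(F(x) + H(x)),
\]
which matches the expression derived above. This proves the identity, hence the lemma. The only bookkeeping I anticipate concerns the degenerate cases ($F(x)=\cb{+\infty}$, $G(x)+D = \emptyset$, or $u^* \notin -D^\circ$); these are absorbed by the standing conventions on $\pm\infty$ and by the hypothesis $\phi(u^*) \subseteq Y$, which in particular excludes $\phi(u^*) = \cb{-\infty}$ and thereby forces $u^* \in -D^\circ$ (otherwise $\scp{u^*,v}$ is unbounded below on $G(x)+D$ for some $x \in \dom G$).
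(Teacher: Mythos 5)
Your proposal is correct and follows essentially the same route as the paper's proof: the paper establishes the identity $\Inf\bigcup_{u\in U}(W(u)-\scp{u^*,u}c)=\phi(u^*)$ by exactly the chain of steps you describe (translation commuting with $\Inf$, absorbing the inner $\Inf$ via Proposition \ref{pr_3}(ii), the sign substitution $u\mapsto -u$, and reinserting the inner $\Inf$ over $G(x)+D$), merely written as one chain of equalities rather than meeting in the middle. Your explicit attention to the degenerate cases is, if anything, slightly more careful than the paper's.
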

\begin{proof}
By definition, $u^* \in \partial^+W(0,y)$ means
$$y \in \Inf\bigcup_{u \in U}(W(u) - \scp{u^*,u} c).$$
We have
$$
\renewcommand{\arraystretch}{1.8}
\begin{array}{ll}
 \displaystyle\Inf\bigcup_{u \in U}(W(u) - \scp{u^*,u} c) &= 
 \displaystyle\Inf\bigcup_{u \in U}\Inf\bigcup_{x \in X} (\varphi(x,u) - \scp{u^*,u} c) \\ 
  &= \displaystyle\Inf\bigcup_{-u \in G(x)+D,\, x \in X} (F(x) - \scp{u^*,u} c) \\
  &= \displaystyle\Inf\bigcup_{u \in G(x)+D,\,x \in X} (F(x) + \scp{u^*,u} c) \\
  &= \displaystyle \Inf\bigcup_{x \in X} \ofgg{F(x) + \Inf\bigcup_{u \in G(x)+D} \scp{u^*,u} c}\\
  &= \displaystyle \Inf\bigcup_{x \in X} L(x,u^*) \;=\; \phi(u^*),
  \end{array}
$$  
which proves the claim.
\end{proof}

\begin{definition}
Problem \eqref{p} is called {\em stable} if $W(0)\neq \cb{+\infty}$, $W(0)\neq \cb{-\infty}$ and
$\partial^+ W(0,y)\neq \emptyset$ for all $y\in W(0)$.
\end{definition}

\begin{remark}
In Bo\c{t}, Grad and Wanka \cite{BotGraWan09} the definition of $k$-subgradients is used in order to introduce the property that the primal set-valued optimization problem is $k$-stable: The problem $(P)$ is called $k$-stable with respect to a certain set-valued perturbation map if the corresponding minimal value map is $k$-subdifferentiable at $0$.
\end{remark}

In the proof of the next theorem we use the following lemma.

\begin{lemma}\label{lem_property} 
Let $A,B \subseteq  Y$ with $\emptyset \neq \Cl_+A \neq  Y$ and $\emptyset \neq \Cl_+B \neq  Y$, then
$$A \preccurlyeq B \Leftrightarrow (A-\Int C)\cap B=\emptyset.$$
\end{lemma}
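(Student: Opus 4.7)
The plan is to decompose the claim into two equivalences,
\[ A \preccurlyeq B \;\iff\; B \subseteq \Cl_+ A \;\iff\; (A - \Int C) \cap B = \emptyset, \]
so that the lemma becomes the composition of a ``cone/closure monotonicity'' statement and a set-complement identity.

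For the first equivalence, the direction ``$\Rightarrow$'' is immediate from $B \subseteq \Cl_+ B$ combined with $\Cl_+ B \subseteq \Cl_+ A$. For the converse, I would use that, by \eqref{eq_clplus}, $\Cl_+ A = \cl(A + C)$ is topologically closed and satisfies $\Cl_+ A + C \subseteq \Cl_+ A$ (since $C + C \subseteq C$). Hence $B \subseteq \Cl_+ A$ yields $B + C \subseteq \Cl_+ A$ and, passing to closures, $\Cl_+ B = \cl(B + C) \subseteq \cl \Cl_+ A = \Cl_+ A$.

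For the second equivalence, I reduce everything to the identity $Y \setminus \Cl_+ A = A - \Int C$. Combining items \eqref{pr_0_4}, \eqref{pr_0_5}, and \eqref{pr_0_6} of Proposition \ref{pr}, the space $Y$ is partitioned as the disjoint union $\Inf A \sqcup (\Inf A - \Int C) \sqcup (\Inf A + \Int C)$, and $\Cl_+ A = \Inf A \cup (\Inf A + \Int C)$, so $Y \setminus \Cl_+ A = \Inf A - \Int C$. A short direct verification using Proposition \ref{pr}\eqref{pr_0_3} in the form $A + \Int C = \Inf A + \Int C$ together with the definition of $\Inf A$ then allows me to replace $\Inf A$ by $A$ on the right-hand side. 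Given this, the second equivalence is immediate: $B \subseteq \Cl_+ A$ iff $B \cap (Y \setminus \Cl_+ A) = \emptyset$ iff $(A - \Int C) \cap B = \emptyset$.

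The step I expect to be the most delicate is exactly the replacement of $\Inf A$ by $A$ in the identity $Y \setminus \Cl_+ A = \Inf A - \Int C$, since this is not one of the items of Proposition \ref{pr}; it has to be argued by hand, and it is essentially the reason the lemma is being singled out rather than subsumed by that proposition. The remainder of the argument is just routine bookkeeping with closures and the cone $C$.
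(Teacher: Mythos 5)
Your outline reproduces the paper's own argument: first reduce $A \preccurlyeq B$ to $B \subseteq \Cl_+ A$ (which you do correctly, and in more detail than the paper gives), and then convert that inclusion into a disjointness condition using the partition of $Y$ provided by Proposition \ref{pr} \eqref{pr_0_4}--\eqref{pr_0_6}. The step you single out as delicate is, however, not merely delicate: it cannot be carried out. What items \eqref{pr_0_4}--\eqref{pr_0_6} give is $Y \setminus \Cl_+ A = \Inf A - \Int C$, and Proposition \ref{pr} \eqref{pr_0_3} controls $A + \Int C$, not $A - \Int C$; the sets $A - \Int C$ and $\Inf A - \Int C$ differ in general, and neither inclusion between them holds for arbitrary $A$ with $\emptyset \neq \Cl_+ A \neq Y$.

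Concretely, take $Y = \R^2$, $C = \R^2_+$, $A = \cbk{(0,0)}$. Then $\Cl_+ A = \R^2_+$ and $\Inf A = \cbk{(t,0) \st t \geq 0} \cup \cbk{(0,t) \st t\geq 0}$, so $\Inf A - \Int C = Y \setminus \R^2_+$, whereas $A - \Int C = -\Int \R^2_+$ is only the open negative orthant. Taking $B = \cbk{(-1,1)}$ gives $(A - \Int C) \cap B = \emptyset$ although $B \not\subseteq \Cl_+ A$; so not only does your claimed identity $Y\setminus\Cl_+ A = A - \Int C$ fail, the lemma itself is false as stated for general $A$. (The left-to-right implication also fails in general: for $A = \cbk{(0,0),(1,1)}$ and $B = \cbk{(0,0)}$ one has $A \preccurlyeq B$ but $(0,0) = (1,1)-(1,1) \in (A - \Int C)\cap B$.) Both the statement and your argument --- and the paper's own proof, which cites \eqref{pr_0_5}, \eqref{pr_0_6} and makes the same silent identification of $\Inf A - \Int C$ with $A - \Int C$ --- are rescued by adding the hypothesis that $A$ is self-infimal, i.e., $A = \Inf A$; then indeed $A - \Int C = \Inf A - \Int C = Y \setminus \Cl_+ A$ and everything closes. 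This extra hypothesis holds in every application of the lemma in Theorem \ref{th_5_1_0}, where $A$ is either $\bar p = W(0)$ or $\phi(u^*)$, both infimal sets. So your diagnosis of where the difficulty sits is exactly right, but the "short direct verification" you defer does not exist; the fix is to strengthen the hypothesis, not to find a cleverer verification.
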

\begin{proof}
$A \preccurlyeq B$ is equivalent to $B \subseteq \Cl_+ A$. By Proposition \ref{pr} \eqref{pr_0_5}, \eqref{pr_0_6}, the latter inclusion is equivalent to $(A-\Int C)\cap B=\emptyset$.
\end{proof}

\begin{theorem}\label{th_5_1_0} If \eqref{p} is stable, then strong duality holds for \eqref{p} and \eqref{d}, that is, 
$$\Sup \bigcup_{ u^* \in  U^*} \phi(u^*) = \Inf \bigcup_{x \in S} F(x).$$
\end{theorem}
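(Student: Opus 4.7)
The plan is to combine the weak duality inequality $\bar d \preccurlyeq \bar p$ furnished by Theorem~\ref{th_1w} with a converse inequality $\bar p \preccurlyeq \bar d$ extracted from the stability hypothesis. Because both $\bar p$ and $\bar d$ lie in the complete lattice $\I$ of self-infimal sets (the former by the idempotency $\Inf\Inf = \Inf$ of Proposition~\ref{pr}(ix), the latter because it is the supremum in $\I$ of the family $\{\phi(u^*)\}_{u^* \in U^*}$, each member of which is self-infimal), the two $\preccurlyeq$-inequalities together will yield the desired set equality $\bar p = \bar d$.

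To prove $\bar p \preccurlyeq \bar d$ I intend to apply Lemma~\ref{lem_property}, which reduces the task to verifying the disjointness
$$(\bar p - \Int C) \cap \bar d = \emptyset$$
plus the ``proper $\Cl_+$'' conditions discussed below. The decisive reformulation of stability is Lemma~\ref{lem_510}: the condition $u^* \in \partial^+ W(0,y)$ is equivalent to $y \in \phi(u^*)$ as soon as $\phi(u^*) \subseteq Y$, and this side condition is automatic here because the presence of a finite point $y$ in $\phi(u^*)$ rules out $\phi(u^*) = \cb{+\infty}$ as well as $\phi(u^*) = \cb{-\infty}$. So stability amounts to saying that for every $y \in W(0) = \bar p$ there exists $u^* \in -D^\circ \subseteq U^*$ with $y \in \phi(u^*)$; in particular, $\bar p \subseteq \bigcup_{u^* \in U^*} \phi(u^*)$.

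Suppose now, for contradiction, that some $y \in \bar d$ satisfies $y \in \bar p - \Int C$, so $y = \hat y - c'$ with $\hat y \in \bar p$ and $c' \in \Int C$. By the previous observation pick $u^* \in U^*$ with $\hat y \in \phi(u^*)$; since $0 \in C$, this places $\hat y$ in $\Cl_-\bigcup_{v^* \in U^*}\phi(v^*)$. On the other hand $\hat y = y + c'$ belongs to $\{y\} + \Int C$, so
$$\hat y \in \bigl(\{y\} + \Int C\bigr) \cap \Cl_-\bigcup_{v^* \in U^*}\phi(v^*),$$
which contradicts $y \in \bar d = \wMax\Cl_-\bigcup_{v^* \in U^*}\phi(v^*)$. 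This yields the required disjointness.

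I expect the main technical obstacle to be the bookkeeping of the hypotheses of Lemma~\ref{lem_property}, namely $\bar p, \bar d \subseteq Y$ together with $\emptyset \neq \Cl_+ \bar p,\; \Cl_+\bar d \neq Y$. For $\bar p$ these conditions are exactly the content of $W(0) \notin \{\cb{+\infty},\cb{-\infty}\}$ in the definition of stability. For $\bar d$, non-emptiness of $\Cl_+ \bar d$ follows from weak duality $\Cl_+ \bar d \supseteq \Cl_+ \bar p$, whereas $\Cl_+ \bar d \neq Y$ (equivalently $\bar d \neq \cb{-\infty}$) follows via the $\wMax\Cl_-$ description of $\Sup$ from the fact that $\bigcup_{u^*} \phi(u^*)$ contains a finite point---any $y \in W(0)$, by stability---combined with $C \neq Y$.
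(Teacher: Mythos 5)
Your proposal is correct and follows essentially the same route as the paper's proof: reduce $\bar p \preccurlyeq \bar d$ via Lemma~\ref{lem_property} to the disjointness $(\bar p - \Int C)\cap \bar d = \emptyset$, use stability together with Lemma~\ref{lem_510} to place each $\hat y \in W(0)$ in some $\phi(u^*)$, and derive a contradiction. The only cosmetic difference is the last step, where the paper applies Lemma~\ref{lem_property} once more to contradict $\phi(u^*) \preccurlyeq \bar d$ while you unfold $\Sup$ as $\wMax\Cl_-$ and contradict weak maximality of $y$ directly; these are the same argument.
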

\begin{proof} We set 
$ \bar d=\Sup \bigcup_{ u^* \in  U^*}\phi(u^*)$
and
$\bar p = W(0) =  \Inf \bigcup_{x \in S} F(x)$.
By assumption, we have $W(0) \neq \cb{+\infty}$ and $W(0) \neq \cb{-\infty}$, which implies $\emptyset \subsetneq \Cl_+ W(0) \subsetneq Y$. Take some $y \in W(0)$. Since \eqref{p} is stable, there is some $u^*\in U^*$ with $y\in \phi(u^*)$ (by Lemma \ref{lem_510}). Using weak duality we get $\phi(u^*) \preccurlyeq \bar d \preccurlyeq \bar p$. Thus $\bar d \neq \cb{-\infty}$ and $\bar d \neq \cb{+\infty}$, which implies $\emptyset \subsetneq \Cl_+ \bar d \subsetneq Y$.

By weak duality, it remains to prove $\bar p \preccurlyeq \bar d$.
Taking into account Lemma \ref{lem_property}, we have to prove that $(\bar{p}-\Int C)\cap \bar d=\emptyset.$ On the contrary, suppose that  there is $y \in Y$ with $y \in (\bar{p}-\Int C)\cap \bar d$. Then there exists $z \in \bar{p} = W(0)$ and $c \in \Int C$ such that 
$y=z-c$. On the other hand, there exists $u^* \in \partial^+W(0,z)$. By Lemma \ref{lem_510}, this means $z \in \phi(u^*)$. Hence $y \in (\phi(u^*)-\Int C) \cap \bar d$. By Lemma \ref{lem_property}, this contradicts $\phi(u^*) \preccurlyeq \bar d$.
\end{proof}

\begin{theorem}\label{th_5_1} If $F$ is $C$-convex, $G$ is $D$-convex, 
\begin{equation}\label{eq_cq3}
G(\dom F) \cap  (-\Int D) \neq \emptyset,
\end{equation}
and $W(0)\neq \cb{-\infty}$, then \eqref{p}  is stable.
\end{theorem}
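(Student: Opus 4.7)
The plan is to reduce to scalar Lagrange duality via the Gerstewitz functional and then lift the resulting scalar multiplier to a positive subgradient in the vector sense.

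The easy half of stability is immediate: the Slater condition \eqref{eq_cq3} supplies $\bar x \in \dom F \cap S$, forcing $W(0) \ne \cb{+\infty}$, and combined with the hypothesis $W(0) \ne \cb{-\infty}$ this gives $W(0) \subseteq Y$. Fix $y \in W(0)$. Using the monotone continuous sublinear functional $\varphi_c(z) := \inf\cb{t \in \R : tc - z \in C}$ (which satisfies the translation identity $\varphi_c(z + \alpha c) = \varphi_c(z) + \alpha$ and $\varphi_c(w) < t \iff w \in tc - \Int C$), form the convex scalar surrogate $h_y(x) := \inf_{z \in F(x)} \varphi_c(z - y)$. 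Convexity of $h_y$ follows from $C$-convexity of $F$ together with sublinearity and $C$-monotonicity of $\varphi_c$; convexity of $S$ follows from $D$-convexity of $G$. The scalar problem $\min_{x\in S} h_y(x)$ has value $0$: the lower bound $h_y \ge 0$ on $S$ is weak minimality of $y$, and the opposite bound comes from approximating $y$ by $f_n + d_n$ with $x_n \in S$, $f_n \in F(x_n)$, $d_n \in C$, together with $C$-monotonicity of $\varphi_c$.

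Apply scalar Lagrange duality to this convex problem with set-valued constraint $G(x) \cap -D \ne \emptyset$. The perturbation $p(u) := \inf\cb{h_y(x) : -u \in G(x) + D}$ is convex and, by Slater, bounded above near $0$ (since $-u \in G(\bar x) + D$ for small $u$), so $\partial p(0) \ne \emptyset$; and because $p$ is $D$-increasing, $\partial p(0) \subseteq -D^\circ$. Picking $\mu^* \in \partial p(0)$, the subgradient inequality reads
\[ h_y(x) + \scp{\mu^*,v} \ge 0 \quad \text{for all } x \in X,\ v \in G(x) + D, \]
whence $\alpha_x := \inf\cb{\scp{\mu^*,v} : v \in G(x) + D} \ge -h_y(x)$ is finite on $\dom F \cap \dom G$.

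It remains to verify $y \in \phi(\mu^*)$; by Lemma~\ref{lem_510} this is exactly $\mu^* \in \partial^+W(0,y)$. Resolving $\Inf\bigcup_{u \in G(x)+D}\scp{\mu^*,u}c = \alpha_x c + \wMin C$ yields $L(x,\mu^*) + C = F(x) + \alpha_x c + C$ (via $\wMin C + C = C$, which holds because $0 \in \bd C$), with $L(x,\mu^*) = \cb{+\infty}$ for $x \notin \dom F \cap \dom G$. Any hypothetical $z = f + \alpha_x c + w \in L(x,\mu^*) \cap (y - \Int C)$ with $f \in F(x)$ and $w \in \wMin C$ would force $y - f - \alpha_x c \in \Int C$ (using $\wMin C + \Int C \subseteq \Int C$) and hence $\varphi_c(f - y) + \alpha_x = \varphi_c(f + \alpha_x c - y) < 0$ by translation, contradicting $h_y(x) + \alpha_x \ge 0$; so $(y - \Int C) \cap \bigcup_x L(x,\mu^*) = \emptyset$. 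Choosing $g_n \in G(x_n) \cap -D$ in the approximating sequence, the sandwich $-h_y(x_n) \le \alpha_{x_n} \le \scp{\mu^*,g_n} \le 0$ forces $\scp{\mu^*,g_n} \to 0$, so $f_n + \scp{\mu^*,g_n}c + d_n \to y$ lies in $\bigcup_x L(x,\mu^*) + C$, placing $y$ in $\Cl_+ \bigcup_x L(x,\mu^*)$. The main obstacle is precisely this vectorial lift: the scalar multiplier supplies only a real inequality, and converting it to weak minimality in $Y$ requires careful infimal-set arithmetic around $\alpha_x c + \wMin C$ and attentive handling of the $\pm\infty$ degenerate cases.
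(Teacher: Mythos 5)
Your proof is correct in substance, but it takes a genuinely different route from the paper. The paper works directly in the product space $Y\times U$: it forms $Q=(F,G)$, separates the convex set $Q(X)+C\times D$ from $\Int\of{(\cb{\bar y}-C)\times(-D)}$ to obtain a pair $(y^*,u^*)\in C^\circ\times D^\circ\setminus\cb{(0,0)}$, uses the Slater condition \eqref{eq_cq3} to force $y^*\neq 0$, normalizes $\scp{y^*,c}=-1$, and reads off $\bar y\notin\phi(u^*)+\Int C$ (hence $\bar y\in\phi(u^*)$) from the separating inequality. You instead scalarize first with the Gerstewitz functional $\varphi_c$, obtain a scalar convex program with optimal value $0$, invoke scalar perturbational duality (Slater gives boundedness above, hence continuity, of the scalar value function $p$ at $0$ and so $\partial p(0)\neq\emptyset$, with $\partial p(0)\subseteq -D^\circ$ by monotonicity), and then lift the scalar multiplier back via the explicit computation $L(x,\mu^*)+C=F(x)+\alpha_x c+C$. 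Both arguments spend the Slater hypothesis in the same place (ruling out a degenerate vertical multiplier, resp.\ securing continuity of $p$ at $0$), and the two are essentially equivalent in content: your pair (scalarization by $\varphi_c$, multiplier $\mu^*$) corresponds to the paper's normalized pair $(y^*,u^*)$ with $\scp{y^*,c}=-1$. What each buys: the paper's separation is shorter and stays entirely at the vector level, whereas yours reduces everything to one-dimensional convex analysis (closer in spirit to the original scalarization proof of Theorem~\ref{th_ld} in the book that this paper is offering an alternative to), at the price of the final lift, which you do carry out correctly — the non-domination half $(y-\Int C)\cap\bigcup_x L(x,\mu^*)=\emptyset$ via the inequality $h_y(x)+\alpha_x\geq 0$ and the translation property of $\varphi_c$, and the membership half $y\in\Cl_+\bigcup_x L(x,\mu^*)$ via the sandwiched sequence $\scp{\mu^*,g_n}\to 0$. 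Two technical points you wave at rather than settle, though the paper is no more careful: the identity $\Inf\bigcup_{u\in G(x)+D}\scp{\mu^*,u}\cb{c}=\alpha_x c+\wMin C$ tacitly uses closedness of $C$ (otherwise $\wMin\cl C$ appears, which changes nothing essential), and the case $-\infty\in F(x)$ together with the convention $\cb{+\infty}+\cb{-\infty}=\cb{+\infty}$ for $x\in\dom G\setminus\dom F$ should be dispatched explicitly so that $L(x,\mu^*)=\cb{+\infty}$ off $\dom F\cap\dom G$.
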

\begin{proof}
From \eqref{eq_cq3}, we get $W(0) \neq \cb{+\infty}$.

Let $\bar y \in W(0)$. By Lemma \ref{lem_510} we have to show that there exists $u^*\in U^*$ with $\bar y \in \phi(u^*)$.
The map $Q\colon X\rightrightarrows Y\times U$ defined by $Q(x)=(F(x),G(x))$ is $C\times D$-convex. Thus, $Q(X)+C\times D$ is a convex set. We next show that
\begin{equation}\label{eq_1_th_5_1}
\left(Q(S)+C\times D\right)\cap \Int (B\times (-D))=\emptyset
\end{equation}
where $B=\cb{\bar y}-C$. Indeed, if there exist $x'\in S$ and $(y,u)$ such that 
$$(y,u)\in \left((F(x'),G(x'))+C\times D\right)\cap \Int (B\times (-D)),$$
then $y\in (F(x')+C)\cap (\cb{\bar y}-\Int C)$ and $u\in (G(x')+D)\cap -\Int D$. Thus, $y'=\bar{y}-c'$ where $y'\in F(x')$ and $c'\in \Int C$ and $(G(x')+D)\cap (-D)\neq \emptyset$ (that is, $x'\in S$) which contradicts $\bar y \in W(0) = \Inf \bigcup_{x \in S} F(x).$

By \eqref{eq_1_th_5_1}, applying a separation theorem, there exists a pair $(y^*,u^*)\in Y^*\times U^*\setminus \{(0,0)\}$ such that
\begin{equation}\label{eq_2_th_5_1}
\scp{y^*, y} +\scp{u^*, u} \leq \scp{y^*, b}+\scp{u^*,-d}  
\end{equation}
for all $(y,u)\in Q(S)+C\times D$, $b\in B$ and $d\in D$.
We deduce that $(y^*,u^*)\in (C^\circ \times D^\circ) \setminus \{(0,0)\}.$ 
This implies
$$
\forall (y,u) \in Q(S)+C\times D:\; \scp{y^*, y} + \scp{u^*,u} \leq \scp{y^*, \bar y}. 
$$
Since, by \eqref{eq_clplus}, $\Cl_+ W(0) = \Cl_+ F(S) = \cl(F(S)+C)$, we get
\begin{equation}\label{eq_3_th_5_1}
\forall y \in \Cl_+ F(S),\; \forall u \in G(S)+D:\; \scp{y^*, y} + \scp{u^*,u} \leq \scp{y^*, \bar y}. 
\end{equation}

We show that $y^* \neq 0$. Assuming the contrary, we get $u^*\neq 0$ and, by \eqref{eq_3_th_5_1}, we have 
$$\forall u \in G(S)+D:\; \scp{u^*, u}\leq 0.$$
On the other hand, by \eqref{eq_cq3}, there exists $x \in S$ and $u' \in G(x) \cap (-\Int D)$, i.e., $\scp{u^*, u'}>0$ as  $u^* \in D^\circ\setminus \{0\}$. Since $u' \in G(x) \subseteq  G(x) + D$ this is a contradiction.

Since $y^* \in C^\circ\setminus\cb{0}$, for the fixed vector $c \in \Int C$ according to \eqref{eq_c}, we have 
$\scp{y^*,c}<0$. Without loss of generality we can assume $\scp{y^*,c}=-1$. 

Since $\bar y \in W(0) \subseteq \Cl_+ F(S)$, by \eqref{eq_3_th_5_1}, we have  $\scp{u^*, u}\leq 0$ for all $u\in G(\bar x) \subseteq G(\bar x)+D$. Since $u^*\in D^\circ$, we have $\scp{u^*,\bar u}=0$ for all $u \in G(\bar x)\cap -D$. Thus, \eqref{eq_3_th_5_1} can be written as
\begin{equation*}
\forall y \in \Cl_+ F(S),\; \forall u \in G(S)+D:\; \scp{y^*, y - \scp{u^*,u}\cb{c}} \leq \scp{y^*,\bar y}.
\end{equation*}
From weak duality, we know that $\bar y \in \Cl_+ \phi(u^*)$. Assuming that $\bar y \in \phi(u^*) + \Int C$, we obtain a contradiction to the latter inequality. Hence, by Proposition \ref{pr}, we have $\bar y \in \phi(u^*)$.
\end{proof}

\noindent
{\bf Another proof of Theorem \ref{th_ld}:} If $W(0)=\cb{-\infty}$ the statement follows from weak duality.
Otherwise it is obtained by combining Theorem \ref{th_5_1_0} with Theorem \ref{th_5_1}. \hfill $\Box$\medskip\noindent

\section{Lagrange duality with operators as dual variables}

We establish in this section another type of dual problem where the dual variables are operators rather than vectors as in problem $(\rm D)$. The usage of operators is more common in the literature (see, for instance, \cite{Corley87,LiChe97,Kur99,Luc88,HerRod07}). We will see, however, that a duality theory based on operators as dual variables is an easy consequence of the above results.

Denote by $\mathcal{L}$ the set of all linear continuous operators from $U$ to $ Y$ and by $\mathcal{L}_{+}$ the subset of all positive operators, that is, $\mathcal{L}_+ :=\{T\in \mathcal{L}\colon T(D)\subseteq
 C \}$.  Given $T\in \mathcal{L}$ and $A\subseteq U$ we write 
$T(A)=\cb{T(a) \st a \in A}$. Let $\mathcal{L}_c$ be a subset of $\mathcal{L}$ defined by $\LL_c := \{T \in \LL \st T=\scp{ u^*,\cdot} c \text{ for some } u^* \in -D^\circ  \}. $
Obviously, we have
\begin{equation}\label{eq_op22}
 \LL_c \subseteq \LL_+.
\end{equation}
Moreover, $\LL_c$ is isomorphic to $-D^\circ \subseteq U^*$.

The Lagrangian map  $L: X \times \LL \to \overline{Y}$ is defined by
\begin{equation}\label{L}
 L(x,T):= F(x) + T(G(x)).
\end{equation}
We define the dual objective function $\Phi: \LL \to \overline{Y}$ by
\begin{equation*}\label{duality-f-2}
 \Phi(T):= \Inf\bigcup_{x \in X} L(x,T).
\end{equation*}
The associated dual problem is
\begin{equation}\label{d_op}
 \tag{$\D$} \tilde d:= \Sup\bigcup_{T \in \LL_+} \Phi(T).
\end{equation}

Comparing the two dual problems \eqref{d} (with vectors as dual variables) and \eqref{d_op} (with operators as dual variables), we observe that Lagrangian \eqref{eq_lagr} for problem \eqref{d} involves the cone $D$ but Lagrangian \eqref{L} for problem \eqref{d_op} does not. On the other hand, the supremum in \eqref{d} is taken over the whole linear space $U^*$ whereas in \eqref{d_op} only the subspace $\LL_+$ of the linear space $\LL$ is considered. A reformulation of problem \eqref{d} clarifies the connection. Consider, instead of \eqref{eq_lagr}, the Lagrangian
\begin{equation}\label{eq_lagr1}
 \hat L:  X \times  U^* \rra \overline{Y},\qquad
   \hat L(x, u^*) = F(x) + \Inf\bigcup_{u \in G(x)}\scp{ u^*,u}\cb{c},
\end{equation}
and the corresponding dual objective function
\[  \hat\phi:  U^* \ra \overline{Y}, \qquad \hat\phi( u^*):= \Inf\bigcup_{x \in  X} \hat L(x, u^*).\]
\begin{lemma} The dual objective function of problem \eqref{d} can be expressed as
\begin{equation}\label{phi_c}
\phi(u^*)= \left \{\begin{array}{cl}
                                 \hat \phi(u^*) & \mbox{ if }\quad  u^* \in -D^\circ \\
                                 \cb{-\infty} & \mbox{ otherwise. }
                               \end{array}\right.
\end{equation}
\end{lemma}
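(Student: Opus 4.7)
Fix $x\in X$ and set
\[
  A_x := \bigcup_{u\in G(x)}\scp{u^*,u}\cb{c}, \qquad B_x := \bigcup_{u\in G(x)+D}\scp{u^*,u}\cb{c},
\]
so that $\hat L(x,u^*)=F(x)+\Inf A_x$ and $L(x,u^*)=F(x)+\Inf B_x$. Because $0\in D$, we always have $A_x\subseteq B_x$. The plan is to compare $\Inf A_x$ with $\Inf B_x$ separately in the two cases $u^*\in -D^\circ$ and $u^*\notin -D^\circ$, then take $\Inf\bigcup_{x\in X}$ on both sides to pass from the Lagrangians to the dual objective functions.

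Suppose first that $u^*\in -D^\circ$, so $\scp{u^*,d}\ge 0$ for every $d\in D$. Since $c\in\Int C\subseteq C$ and $C$ is a convex cone closed under nonnegative scaling, $\scp{u^*,d}c\in C$, and therefore
\[ B_x \;=\; A_x + \scp{u^*,D}\cb{c} \;\subseteq\; A_x + C. \]
Using the standard identity $C+\Int C=\Int C$ together with $A_x\subseteq B_x$, this gives $A_x+\Int C=B_x+\Int C$. Proposition \ref{pr}\,(ii) then yields $\Inf A_x=\Inf B_x$ whenever $\emptyset\ne \Cl_+ A_x\ne Y$; the degenerate cases ($G(x)=\emptyset$, in which both infima equal $\cb{+\infty}$, or $\scp{u^*,G(x)}$ unbounded below, in which both equal $\cb{-\infty}$) are handled directly from the definition. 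Consequently $L(x,u^*)=\hat L(x,u^*)$ for every $x\in X$, and $\phi(u^*)=\hat\phi(u^*)$ follows by applying $\Inf\bigcup_{x\in X}$.

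Suppose now that $u^*\notin -D^\circ$ and pick $d_0\in D$ with $\beta:=\scp{u^*,d_0}<0$. For any $x\in \dom F\cap \dom G$ and any $g\in G(x)$ we have $g+td_0\in G(x)+D$ for all $t\ge 0$, so $B_x\supseteq\cbg{\alpha c\st\alpha\le \scp{u^*,g}}$. Because $c\in\Int C$, for every $y\in Y$ there exists $\lambda>0$ with $y+\lambda c\in C$, i.e.\ $y\in(-\lambda c)+C\subseteq B_x+C$; thus $B_x+C=Y$, so $\Cl_+ B_x=Y$ and $\Inf B_x=\cb{-\infty}$. With the extended-arithmetic convention $F(x)+\cb{-\infty}=\cb{-\infty}$ for $x\in\dom F$, we obtain $L(x,u^*)=\cb{-\infty}$, hence $-\infty\in\bigcup_{x\in X}L(x,u^*)$, and the first clause of \eqref{eq_clplus} forces $\Cl_+\bigcup_{x\in X}L(x,u^*)=Y$, giving $\phi(u^*)=\cb{-\infty}$.

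The main obstacle is the careful bookkeeping of the extended-value conventions (degenerate subcases in the first part, and the sum $F(x)+\cb{-\infty}$ in the second); the geometric content, namely that $c\in\Int C$ forces a set containing arbitrarily large negative multiples of $c$ to have upper closure equal to $Y$, is entirely routine.
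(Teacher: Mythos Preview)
Your argument is correct. The route, however, differs from the paper's. You work pointwise in $x$: in the case $u^*\in -D^\circ$ you show directly that $A_x+\Int C=B_x+\Int C$ and invoke Proposition~\ref{pr}\,\eqref{pr_0_2} to get $\Inf A_x=\Inf B_x$, hence $L(x,u^*)=\hat L(x,u^*)$; in the case $u^*\notin -D^\circ$ you produce a single $x$ with $\Inf B_x=\cb{-\infty}$. The paper instead factors the $D$-contribution once and for all: it first observes
\[
\Inf\bigcup_{d\in D}\scp{u^*,d}\cb{c}=
\begin{cases}
\cb{0} & u^*\in -D^\circ,\\
\cb{-\infty} & \text{otherwise},
\end{cases}
\]
and then, using the additivity rule $\Inf(A+B)=\Inf(\Inf A+\Inf B)$ (Proposition~\ref{pr}\,\eqref{pr_0_7} / Proposition~\ref{pr_2}\,\eqref{pr_2_1}), derives the single identity
\[
\phi(u^*)=\hat\phi(u^*)+\Inf\bigcup_{d\in D}\scp{u^*,d}\cb{c},
\]
from which both cases of \eqref{phi_c} drop out at once. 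The paper's factorization is shorter and handles the two cases uniformly, while your argument is more hands-on and avoids appealing to Proposition~\ref{pr_2}. One small point: in your second case you pick $x\in\dom F\cap\dom G$; strictly speaking only $x\in\dom G$ is needed (so that $B_x\neq\emptyset$), together with the inf-addition convention $F(x)+\cb{-\infty}=\cb{-\infty}$ that you already invoke. This is the same standing convention the paper relies on implicitly.
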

\begin{proof} 
Since $c \in \Int C$, we have
\[
\Inf\bigcup_{d \in D} \scp{d,u^*} c = \left\{
 \begin{array}{cl}
   \cb{0}   & \text{ if } u^* \in -D^\circ \\
   \cb{-\infty} & \text{ otherwise. }
 \end{array}     \right.                         
\]
It follows
\[
\renewcommand{\arraystretch}{2}
  \begin{array}{ll}
    \phi(u^*) &= \displaystyle\Inf\bigcup_{x \in X} L(x,u^*) \\
              &= \displaystyle\Inf\bigcup_{x \in X} \ofgg{F(x) + \Inf\bigcup_{u \in G(x)+D} \scp{u,u^*}c }\\
              &= \displaystyle\Inf\bigcup_{x \in X} \ofgg{F(x) + \Inf\bigcup_{u \in G(x), d \in D} \ofg{\scp{u,u^*}c + \scp{d,u^*}c}} \\
              &= \displaystyle\Inf\bigcup_{x \in X} \ofgg{F(x) + \Inf\bigcup_{u \in G(x)} \scp{u,u^*}\cb{c}  + \Inf\bigcup_{d \in D}\scp{d,u^*}\cb{c}} \\
              &= \displaystyle \Inf\bigcup_{x \in X} \hat L(x,u^*) + \Inf\bigcup_{d \in D}\scp{d,u^*}\cb{c} \\
              &= \displaystyle \hat \phi(u^*) + \Inf\bigcup_{d \in D}\scp{d,u^*}\cb{c}.              
  \end{array}                      
\]
Combining the two equations, we obtain the result.
\end{proof}

As a consequence, we can define a dual problem
\begin{equation}\label{d_hat}
 \tag{$\hat D$} \hat d:=\Sup\bigcup_{ u^* \in  -D^\circ} \hat\phi(u^*),
\end{equation}
where we obviously have
\begin{equation}\label{dd}
\bar d = \Sup\bigcup_{ u^* \in  U^*} \phi(u^*) = \Sup\bigcup_{ u^* \in  -D^\circ} \hat\phi(u^*) = \hat d.
\end{equation}
Since $-D^\circ$ is isomorphic to $\LL_c$ and $\LL_c \subseteq \LL_+$, we get (using Theorem \ref{th_1})
 \begin{equation}\label{ieq_dd}
  \bar d = \Sup\bigcup_{ u^* \in  U^*} \phi(u^*) \preccurlyeq \Sup\bigcup_{ T \in \LL_+} \Phi(T) = \tilde d. 
 \end{equation} 
We next prove weak duality.

\begin{theorem}[weak duality]\label{th_1wL}  The problems \eqref{p} and \eqref{d_op} satisfy the weak duality inequality, i.e., $$\Sup\bigcup_{ T \in \LL_+} \Phi(T) \preccurlyeq \Inf\bigcup_{x\in S} F(x).$$
\end{theorem}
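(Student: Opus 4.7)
The plan is to fix an arbitrary $T \in \LL_+$ and show $\Phi(T) \preccurlyeq \bar p$ directly, then pass to the supremum via Theorem \ref{th_1}. Note that one cannot simply reduce this to Theorem \ref{th_1w} through the chain \eqref{ieq_dd}, since that inequality runs in the wrong direction ($\bar d \preccurlyeq \tilde d$), so a fresh argument is unavoidable.

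The key pointwise step is the inequality $L(x,T) \preccurlyeq F(x)$ for every $x \in S$. Since $x \in S$, pick $u \in G(x) \cap (-D)$. Positivity of $T$ (i.e., $T(D)\subseteq C$) gives $-T(u) \in C$, so for any $y \in F(x)$ we have $y + T(u) \in F(x) + T(G(x)) = L(x,T)$ and $y = (y+T(u)) + (-T(u)) \in L(x,T) + C$. Thus $F(x) + C \subseteq L(x,T) + C$, and taking closures together with \eqref{eq_clplus} yields $\Cl_+ F(x) \subseteq \Cl_+ L(x,T)$, that is, $L(x,T) \preccurlyeq F(x)$.

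Next I would propagate this to the infima. Monotonicity and idempotence of $\Cl_+$ (Proposition \ref{pr}) combined with Proposition \ref{pr_3}(i) lift the pointwise inclusions to $\Cl_+ \bigcup_{x\in S} L(x,T) \supseteq \Cl_+ \bigcup_{x\in S} F(x)$, which is precisely $\Inf\bigcup_{x\in S} L(x,T) \preccurlyeq \bar p$. Combining this with the restriction inequality $\Phi(T) = \Inf\bigcup_{x\in X} L(x,T) \preccurlyeq \Inf\bigcup_{x\in S} L(x,T)$ (observed right after Theorem \ref{th_1}), one obtains $\Phi(T) \preccurlyeq \bar p$ for every $T \in \LL_+$. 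Since $\bar p$ is thereby an upper bound of $\{\Phi(T) : T \in \LL_+\}$ in $(\I,\preccurlyeq)$, the characterisation of the supremum in Theorem \ref{th_1} immediately gives $\tilde d = \Sup\bigcup_{T \in \LL_+} \Phi(T) \preccurlyeq \bar p$.

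The only real obstacle is bookkeeping around the degenerate cases where $F(x)$ touches $\pm\infty$ or $G(x)$ is empty: if $x \notin \dom F$ then $\Cl_+ F(x) = \emptyset$ and the pointwise inequality is vacuous; if $-\infty \in F(x)$ then $\Cl_+ F(x) = Y$, but the same inclusion $F(x) \subseteq L(x,T) + C$ forces $\Cl_+ L(x,T) = Y$ as well; and if $G(x)$ does not meet $-D$, the point $x$ is not in $S$ and plays no role. Because the conventions of Section 2.1 make $\Cl_+$ and $\Inf$ well-defined on the extended space, no case split is needed.
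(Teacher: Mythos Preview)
Your argument is correct. The pointwise step $L(x,T)\preccurlyeq F(x)$ for $x\in S$ via $-T(u)\in C$ is clean, the lift to $\Inf\bigcup_{x\in S}$ through Proposition~\ref{pr_3} is the right mechanism, and the final passage to the supremum via the lattice structure of $(\I,\preccurlyeq)$ is exactly how Theorem~\ref{th_1} is meant to be used. Your handling of the degenerate values is also adequate given the conventions in Section~2.1.

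The route, however, differs from the paper's. The paper first invokes the abstract max--min inequality
\[
\Sup\bigcup_{T\in\LL_+}\Inf\bigcup_{x\in X} L(x,T)\;\preccurlyeq\;\Inf\bigcup_{x\in X}\Sup\bigcup_{T\in\LL_+} L(x,T)
\]
(a direct consequence of Theorem~\ref{th_1}), then restricts the outer infimum from $X$ to $S$ and appeals to Proposition~\ref{pr_r45} to identify $\Sup\bigcup_{T}L(x,T)$ with $F(x)$ on $S$. You instead bypass the inner $\Sup_T$ entirely: you keep $T$ fixed, compare $L(x,T)$ with $F(x)$ pointwise, and only take the supremum over $T$ at the very end. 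What this buys you is self-containment: Proposition~\ref{pr_r45} is stated for the \emph{vector} Lagrangian \eqref{eq_lagr}, so the paper's citation tacitly asks the reader to transfer it to the operator Lagrangian \eqref{L}; your argument needs nothing beyond positivity of $T$ and the monotonicity of $\Cl_+$. Conversely, the paper's proof makes the structural reason (weak duality is just $\sup\inf\le\inf\sup$ plus recovery of the primal objective from the Lagrangian) more visible, at the cost of relying on an auxiliary result whose operator analogue is not explicitly proved.
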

\begin{proof}
By Theorem \ref{th_1}, we have
\[ \Sup\bigcup_{T \in \LL_+} \Inf\bigcup_{x \in X} L(x,T) \preccurlyeq \Inf\bigcup_{x \in X} \Sup\bigcup_{T \in \LL_+} L(x,T).\]
Since $\Phi(T)= \Inf\bigcup_{x \in X} L(x,T)$, it remains to show
\[\Inf\bigcup_{x \in X} \Sup\bigcup_{T \in \LL_+} L(x,T) \preccurlyeq \Inf\bigcup_{x \in S} F(x).\]
But this follows from Proposition \ref{pr_r45} and
\[\Inf\bigcup_{x \in X} \Sup\bigcup_{T \in \LL_+} L(x,T) \preccurlyeq \Inf\bigcup_{x \in S} \Sup\bigcup_{T \in \LL_+} L(x,T),\]
which is a consequence of Theorem \ref{th_1}.
\end{proof}

Finally we obtain strong duality as a conclusion of the Lagrange duality theorem with vectors as variables.

\begin{theorem}[strong duality]
Let $F$ be $C$-convex, let $G$ be $D$-convex, and let
\[
   G(\dom f) \cap (- \Int D) \neq \emptyset.
\]
Then strong duality holds, that is,
$$\Sup\bigcup_{ T \in \LL_+} \Phi(T) = \Inf\bigcup_{x \in S} F(x).$$
\end{theorem}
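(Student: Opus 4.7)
The plan is to sandwich $\tilde d := \Sup\bigcup_{T \in \LL_+} \Phi(T)$ between $\bar p := \Inf\bigcup_{x \in S} F(x)$ from above and $\bar d := \Sup\bigcup_{u^* \in U^*} \phi(u^*)$ from below, and then invoke the already-established strong duality with vectors (Theorem \ref{th_ld}) to collapse the inequality chain into an equality. No separation argument or constraint qualification needs to be reworked from scratch; all the hard analytic work was done in the vector-variable strong duality theorem.

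More concretely, I would proceed in three steps. First, by Theorem \ref{th_1wL} we have the weak duality inequality
\[ \tilde d \preccurlyeq \bar p. \]
Second, the relation \eqref{ieq_dd}, which was derived from the embedding $\LL_c \hookrightarrow \LL_+$ combined with the identity \eqref{dd}, gives
\[ \bar d \preccurlyeq \tilde d. \]
Third, since $F$ is $C$-convex, $G$ is $D$-convex and $G(\dom F) \cap (-\Int D) \neq \emptyset$, the hypotheses of Theorem \ref{th_ld} are met, yielding
\[ \bar d = \bar p. \]
Chaining these: $\bar p = \bar d \preccurlyeq \tilde d \preccurlyeq \bar p$, so by antisymmetry of $\preccurlyeq$ on $\I$ (Theorem \ref{th_1}) we conclude $\tilde d = \bar p$, which is the claim.

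There is essentially no obstacle: the main theorem with operators is a formal corollary, because the operator-based dual contains a subfamily isomorphic to the vector-based dual (this is the content of $\LL_c \subseteq \LL_+$ and the identification of $\LL_c$ with $-D^\circ$), and the operator-based dual is still trapped under the primal by weak duality. The only care needed is the direction of the inequalities: since $\Sup$ is monotone with respect to set inclusion under the ordering $\preccurlyeq$ (as recorded right after Theorem \ref{th_1}), enlarging the index set from $-D^\circ$ to $\LL_+$ can only \emph{raise} the supremum in the $\preccurlyeq$-ordering, which is exactly what gives $\bar d \preccurlyeq \tilde d$ and traps $\tilde d$ correctly. Thus the proof reduces to citing Theorem \ref{th_ld}, Theorem \ref{th_1wL} and \eqref{ieq_dd}.
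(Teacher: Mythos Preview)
Your proof is correct and is essentially identical to the paper's own argument: the paper likewise cites Theorem \ref{th_ld}, inequality \eqref{ieq_dd}, and Theorem \ref{th_1wL} to obtain the chain $\bar p = \bar d \preccurlyeq \tilde d \preccurlyeq \bar p$ and concludes by antisymmetry.
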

\begin{proof}
From Theorem \ref{th_ld}, inequality \eqref{ieq_dd}, and  Theorem \ref{th_1wL}, we get
$$\Inf\bigcup_{x \in S} F(x) = \Sup\bigcup_{ u^* \in U^*} \phi(u^*) \preccurlyeq \Sup\bigcup_{ T \in \LL_+} \Phi(T) \preccurlyeq \Inf\bigcup_{x \in S} F(x),$$
which yields the desired equation.
\end{proof}

\bibliographystyle{abbrv}


\end{document}